\documentclass[12pt,a4paper]{amsart}
\usepackage[utf8]{inputenc}
\usepackage{amsfonts}
\usepackage{amsmath, amssymb}
\usepackage{amsthm}
\usepackage{cases}
\usepackage[margin=.8in]{geometry}
\usepackage{tkz-euclide}
\usepackage{comment}
\usepackage{lineno} 

\newtheorem{theorem}{Theorem}[section]

\newtheorem{defin}[theorem]{Definition}
\newtheorem{lemma}[theorem]{Lemma}

\newtheorem{rem}[theorem]{Remark}

\usepackage{hyperref}
\hypersetup{
	colorlinks = true, 
	urlcolor = blue, 
	linkcolor = blue, 
	citecolor = blue 
}

\let \div \relax

\DeclareMathOperator{\div}{div}

\DeclareMathOperator{\Z}{\mathbb{Z}}
\DeclareMathOperator{\R}{\mathbb{R}}

\DeclareMathOperator{\Eu}{{\rm Eu}}
\DeclareMathOperator{\Sr}{{\rm Sr}}
\DeclareMathOperator{\Rey}{{\rm Re}}
\DeclareMathOperator{\Fr}{{\rm Fr}}
\DeclareMathOperator{\We}{{\rm We}}

\newcommand{\dd}{\, \mathrm{d}}
\newcommand{\del}{\partial}
\newcommand{\eps}{\varepsilon}
\newcommand{\weak}{\rightharpoonup}

\newcommand{\vc}{\mathbf}
\newcommand{\vu}{\vc u}
\newcommand{\vv}{\vc v}

\newcommand{\D}{\mathbf{D}}
\newcommand{\T}{\mathbf{T}}
\newcommand{\obst}{\mathcal{O}}
\newcommand{\oldr}[2]{\overset{\triangledown_{#1}}{#2}}
\newcommand{\Dv}[1]{\D(\vc{#1})}

\renewcommand{\S}{\mathbf{S}}

\renewcommand{\O}{\Omega}

\renewcommand{\rho}{\varrho}

\let\temp\phi
\let\phi\varphi
\let\varphi\temp

\allowdisplaybreaks

\title[Hom. reg. Oldroyd-type fluids]{Homogenization of regularized Oldroyd-type fluids}

\author{Florian Oschmann}
\address{Faculty of Mathematics and Physics, Charles University, Sokolovsk\'a 49/83, 186 00 Praha 8, Czech Republic.}
\email{florian.oschmann@matfyz.cuni.cz}

\author{Jonas Sauer}
\address{Institut f{\"u}r Mathematik, Friedrich-Schiller-Universit{\"a}t, Inselplatz 5, 07737 Jena, Germany}
\email{jonas.sauer@uni-jena.de}

\subjclass[2020]{Primary 35B27, 76A10; Secondary 76M50, 35Q35, 35D30}
\keywords{homogenization, viscoelastic fluids, Oldroyd-type fluids, Darcy's law, nonlinear stress diffusion, relative energy method, weak--strong uniqueness}
\begin{document}

\begin{abstract}
    We study homogenization of a regularized viscoelastic Oldroyd-type model in a periodically perforated bounded domain. The system describes an incompressible non-Newtonian fluid coupled to an elastic extra stress tensor and includes both nonlinear viscosity and nonlinear stress diffusion effects.
    The governing model, introduced by Kreml, Pokorn\'y, and \v{S}alom (2015), covers Oldroyd-A- and Oldroyd-B-type constitutive laws.
    We establish qualitative and quantitative homogenization results in suitable scaling regimes and show convergence toward an effective Darcy law on the macroscopic domain.
    In particular, we prove that, under appropriate assumptions on the scaling parameters, the polymeric stress does not contribute to the effective limit equation.
    The analysis combines uniform estimates, oscillating test-function techniques, and a relative energy method, and additionally yields a weak--strong uniqueness principle for the viscoelastic system.
\end{abstract}

\maketitle

\section{Introduction}
Flows of complex fluids through porous media arise in a variety of applications, ranging from polymer transport and enhanced oil recovery to biological and industrial processes.
In many such situations, the microscopic geometry of the medium strongly influences the effective macroscopic behavior of the fluid.
Deriving effective large-scale equations from microscopic flow laws is therefore a central problem in homogenization theory.

Flows through porous media are frequently modeled by effective macroscopic equations such as Darcy’s law. A rigorous derivation of such effective models from microscopic fluid equations is a classical problem in homogenization theory. For Newtonian fluids, homogenization results in perforated domains are by now well understood and include both incompressible and compressible Navier–Stokes systems, see e.g.~\cite{Allaire1990a,Allaire1990b,LuYang2023,Tartar1980} and \cite{BasaricChaudhuri2024, DieningFeireislLu2017, HoferNecasovaOschmann2025, OschmannPokorny2023} and the references therein.
Recently, also certain classes of non-Newtonian flows have been understood, see e.g.~\cite{HoeferLuOschmann2025} and the references therein.

In contrast, homogenization results for \emph{viscoelastic} fluid models appear to be completely open.
The presence of elastic stresses and their nonlinear coupling with the velocity field introduces substantial additional analytical difficulties, particularly in perforated geometries.
In this work, we address for the first time qualitative and quantitative homogenization of viscoelastic fluids.
More precisely, we study a diffusive non-Newtonian Oldroyd-type model introduced by Kreml, Pokorn\'y, and \v{S}alom \cite{KremlPokornySalom2014}, and investigate its homogenization limit in a periodically perforated domain.

We show that, under suitable scalings, the microscopic dynamics converge to Darcy’s law, while the polymeric stress disappears in the effective macroscopic description.
This is consistent with the underlying asymptotic regime, which corresponds to vanishing inertial and elastic effects in the homogenization limit.
Our first main result is Theorem~\ref{thm:quali}, which establishes convergence of weak solutions toward Darcy's law in the homogenization limit.
We additionally derive quantitative convergence rates in Theorem~\ref{thm:quant} by means of a relative energy method. This relative energy approach enables us to additionally prove a weak--strong uniqueness principle for the scaled system in Theorem~\ref{thm:wk-str}.

The generalized Oldroyd model for a diffusive, time-dependent, non-Newtonian and viscoelastic fluid flow introduced by Kreml, Pokorn\'y, and \v{S}alom in \cite{KremlPokornySalom2014} reads as follows:
\begin{align}\label{NSE_not_scaled}
    \begin{cases}
        \div \vv = 0 & \text{in } (0,T) \times \O,\\
        \rho \del_t \vv+ \rho \div(\vv \otimes \vv) + \nabla \pi - \div(\mu( \lambda_{1} \Dv{v}) \Dv{v}) = \rho \vc f + \div \T & \text{in } (0,T) \times \O,\\
        \T + \zeta \oldr{}{\T} = 2 \eta \Dv{v} & \text{in } (0,T) \times \O,\\
        \vv = 0, \  \del_n \T = 0 & \text{on } (0,T) \times \del \O, \\
        \vv(0,\cdot) = \vv_{0}, \ \T(0,\cdot) = \T_{0} & \text{in } \O,
    \end{cases}
\end{align}
where $\Dv{v}=\frac12 (\nabla \vv + \nabla^T \vv)$ and ${\mathbf B}(\vv,\T) = {\mathbf W}\T-\T {\mathbf W} + a(\Dv{v} \T + \T \Dv{v})$ with ${\mathbf W}=\frac12 (\nabla \vv - \nabla^T \vv)$, and the Oldroyd derivative is given by
\begin{align*}
    \oldr{}{\T} = \partial_t \T + (\vv \cdot \nabla) \T - \eps_1 \div(\gamma(\lambda_2 \nabla\T)\nabla\T) - \vc B(\vv, \T).
\end{align*}

\medskip

In this system $\vv:(0,T)\times \O\to \R^3$ denotes the unknown velocity of the fluid, $\pi: (0,T)\times \O\to \R$ the unknown pressure, and $\T:(0,T)\times \O\to \R^{3\times 3}$ the unknown symmetric extra stress tensor,
while $\vv_0:\O\to\R^3$ and $\T_0:\O\to \R^{3\times 3}$ denote given initial data and $\vc f:(0,T)\times \O\to \R^3$ a given exterior forcing.
The function $\mu:\R^{3\times 3}\to \R$ introduces a nonlinear dependency of the dissipation on the symmetric gradient $\Dv{v}$, which makes the fluid non-Newtonian.
The term $\eps_1 \div(\gamma(\lambda_2 \nabla\T)\nabla\T)$ introduces a nonlinear stress diffusion, which regularizes the equation for the extra stress, with $\eps_1>0$ governing the size of this regularization, and $\gamma:\R^{3\times3\times3}\to\R$ determining its nonlinear nature.
The parameter $a\in [-1,1]$ determines the particular choice of Oldroyd viscoelasticity, with $a=-1$, $a=0$, and $a=1$ corresponding to the classical lower convected (Oldroyd A), corotational (Jaumann), and upper convected (Oldroyd B) models, respectively.
The parameter $\eta>0$ denotes the polymeric viscosity contribution, while $\lambda_1>0$ and $\lambda_2>0$ are scaling parameters associated with the nonlinear viscosity and stress diffusion laws, respectively. Lastly, $\zeta>0$ is the relaxation time of the fluid, and the parameter $\rho>0$ denotes the fluid's (constant) density.

\medskip

In this work, we consider qualitative and quantitative homogenization of this model.
More precisely, let $\Omega = \mathbb{T}^3$ be the three-dimensional torus. Let $Q = (-1,1)^3$ and $\obst \Subset Q$ be a reference particle, a closed simply connected smooth set with $0 \in {\rm int} \, \obst$. For $\eps > 0$ with $(2\eps)^{-1} \in \Z$, set
\begin{align*}
    \O_\eps = \O \setminus \bigcup_{k \in K_\eps} \eps (k + \obst), && K_\eps = \{k \in \Z^3 : \eps(k + \overline{Q}) \subset \O \}.
\end{align*}
On this level, the model \eqref{NSE_not_scaled}, if scaled properly in $\eps$ (see Appendix~\ref{sec:non-dim}), has the form
\begin{align}\label{NSE}
    \begin{cases}
        \div \vv_\eps = 0 & \text{in } (0,T) \times \O_\eps,\\
        \eps^\lambda \del_t \vv_\eps + N_\eps(\vv_\eps) + \nabla \pi_\eps  = \vc f + \eps^\xi \div \T_\eps & \text{in } (0,T) \times \O_\eps,\\
        \T_\eps + \eps^\kappa \oldr{}{\T}_\eps = 2 (1-\beta_\eps) \Dv{v_\eps} & \text{in } (0,T) \times \O_\eps,\\
        \vv_\eps = 0, \ \del_n \T_\eps = 0 & \text{on } (0,T) \times \del \O_\eps, \\
        \vv_\eps(0,\cdot) = \vv_{\eps 0}, \ \T_\eps(0,\cdot) = \T_{\eps 0} & \text{in } \O_\eps,
    \end{cases}
\end{align}
where
\begin{align*}
    N_\eps(\vv_\eps)&:=\eps^\lambda \div(\vv_\eps \otimes \vv_\eps) - \eps^2 \div(\mu(\We_1 \Dv{\vv_\eps})\Dv{\vv_\eps}),
\end{align*}
and where from the nondimensionalization we have introduced the quantities $\We_1>0$ and $\beta_\eps \in (0,1)$ in the viscosity contributions, and similarly the quantity $\Xi_\eps>0$ in the Oldroyd derivative
\begin{align*}
    \oldr{}{\T}_\eps = \partial_t \T_\eps + (\vv_\eps \cdot \nabla) \T_\eps - \eps_1 \div(\gamma(\Xi_\eps \nabla\T_\eps)\nabla\T_\eps) - \vc B(\vv_\eps, \T_\eps).
\end{align*}
In terms of characteristic quantities, this leads for the Strouhal, Euler, Froude, Reynolds, and Weissenberg number as well as for the viscosity ratio to
\begin{align}\label{Strouhaletc}
    \Sr=1 && \Eu=\eps^{-\lambda} && \Fr = \eps^\frac{\lambda}{2} && \Rey = \eps^{\lambda - \xi} && \We = \eps^\kappa && \beta = \eps^{2-\xi}.
\end{align}
As $\eps$ tends to $0$, these scalings lead to a singularly perturbed viscoelastic system in which the relative strength of inertia, viscosity, elasticity, and stress diffusion is controlled by the parameters $\lambda$, $\xi$, and $\kappa$.
Our analysis allows for a broad range of interactions between these effects.
The assumptions of Theorem~\ref{thm:quali} below identify a class of parameter regimes for which the microscopic dynamics converge to an effective Darcy law.
In particular, despite the presence of nonlinear viscoelastic stresses at the microscopic level, the polymeric stress tensor does not contribute to the effective macroscopic equation.
This shows that the Darcy limit is robust with respect to several different viscoelastic constitutive mechanisms and scaling choices.
We mention that since $\beta \in (0,1)$, it is natural to impose $\xi <2$.\\

In this work, the stress function $\gamma$ is assumed to satisfy the following conditions:
\begin{align}\label{assGamma}
\begin{cases}
    \gamma:\R^{3\times 3 \times 3} \to [0,\infty) \text{ is continuous},\\
    [\gamma(\varpi \nabla \T)\nabla\T - \gamma(\varpi \nabla\S)\nabla\S] : (\nabla\T - \nabla\S) \geq c \varpi^{q-2} |\nabla \T - \nabla \S|^q \\
    \quad \text{ for some } q \geq 4 \text{ and all symmetric } \T, \S \in \R^{3\times 3}, \, \varpi > 0.
\end{cases}
\end{align}

For the viscosity function, we employ the same assumptions as in \cite{HoeferLuOschmann2025}:
\begin{align}\label{assMu}
\begin{cases}
    \mu(\Dv{v}) = \mu_0 + g(|\Dv{v}|), \, \mu_0>0,\\
    g:[0,\infty) \to \R \text{ is continuous},\\
    |g(s)| \leq C s 1_{s \leq s_0} + C s^{\max\{p-2,0\}} 1_{s\geq s_0} \text{ for some } s_0>0, \, p>1,\\
    [\mu(\varpi \D_1)\D_1 - \mu(\varpi \D_2)\D_2] : (\D_1 - \D_2) \geq c |\D_1 - \D_2|^2 + c \varpi^{p-2} |\D_1 - \D_2|^p 1_{p>2} \\
    \quad \text{ for all symmetric } \D_1,\D_2 \in \R^{3\times 3}, \, \varpi > 0,
\end{cases}
\end{align}
where $1_{s \leq s_0}$ is the characteristic function of the set $\{s \leq s_0\} \subset [0,\infty)$.\\

At a formal level, one expects that, for suitable parameter regimes (for instance $\lambda > 1, \xi > 0$), the velocity field $\vv_\eps$ converges to $\vv$ as $\eps \to 0$, where $\vv$ is the solution to Darcy's law
\begin{align}\label{Darcy}
    \begin{cases}
    \div \vv = 0 & \text{in } (0,T) \times \O,\\
        \mu_0 M \vv = \vc f - \nabla \pi & \text{in } (0,T) \times \Omega .
    \end{cases}
\end{align}
Here, $M \in \R^{3 \times 3}$ is a symmetric positive definite matrix (called the \emph{resistance matrix}). For a formal derivation without the polymeric stress $\T$, we refer to \cite{HoferNecasovaOschmann2025}. We will show rigorously that the formal limit holds even for a wider range of $\xi$, depending on the values of $\kappa$, $\eps_1$, and $\Xi_\eps$.

Lastly, we will set $\Xi_\eps=1$ in the sequel:
By the coercivity assumption \eqref{assGamma}, the diffusion term contributes only through the combination $\eps_1\Xi_\eps^{q-2}$ in all \emph{a priori} estimates.
In turn, all the following results, including especially the qualitative estimates in Theorem~\ref{thm:quali}, hold true for arbitrary $\Xi_\eps>0$ when replacing $\eps_1$ by $\eps_1 \Xi_\eps^{q-2}$ there.

We emphasize that our results show that Darcy's law and especially the methods used to obtain it appear to be quite universal; in particular, we can cover several different models of viscoelastic flow while the limiting system stays the same for all of them. Moreover, our scaling in \eqref{NSE} corresponds to a time rescaling $t \mapsto t/\eps^\frac{\lambda}{2}$.
Hence, Darcy's law can be seen as a long-time asymptotic limit to system \eqref{NSE}.
In this context, we mention the recent work \cite{HNNP2026}, where the authors show that fluids of Oldroyd-B type behave almost Newtonian as time approaches infinity.\\

\paragraph{\bf Organization of the paper} In Section~\ref{sec:prelim}, we will collect useful lemmata needed in the sequel. In Section~\ref{sec:wkSol+result}, we define weak solutions to system \eqref{NSE} and state the main results. Sections~\ref{sec:unifEst} and \ref{sec:hom} are devoted to uniform estimates and the proof of the qualitative Theorem~\ref{thm:quali}, respectively.
In Section~\ref{sec:REI}, we derive a relative energy inequality and establish the quantitative convergence result in Theorem~\ref{thm:quant}.
Finally, in Section~\ref{sec:wk-str}, we prove the weak--strong uniqueness result Theorem~\ref{thm:wk-str}. \\

\paragraph{\bf Notation} If no ambiguity occurs, we will write $L^p L^q$ instead of $L^p(0,T; L^q(\Omega))$ or $L^p(0,T; L^q(\O_\eps))$. The symbol $a \lesssim b$ indicates that there is some constant $C>0$ which is independent of $a,b,$ and $\eps$ such that $a \leq C b$. Lastly, for a function $f$ defined on a domain $D \subset \R^3$ we denote its zero extension by $\tilde f$, that is,
\begin{align*}
    \tilde f = f \text{ on } D, && \tilde f = 0 \text{ on } \R^3 \setminus D.
\end{align*}

\section{Preliminaries}\label{sec:prelim}
In this section, we will provide lemmata that will be crucial for us in the sequel. The first one concerns Poincar\'e's inequality in the perforated domain $\O_\eps$:
\begin{lemma}\label{lem:Poinc}
For any $\vu \in W_0^{1,p}(\O_\eps; \R^3)$, we have
\begin{align*}
    \|\vu\|_{L^p(\O_\eps)} \lesssim \eps \|\nabla \vu\|_{L^p(\O_\eps)}.
\end{align*}
\end{lemma}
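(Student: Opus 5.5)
The plan is the standard cell-by-cell scaling argument. The perforated domain is $\O_\eps = \O \setminus \bigcup_{k\in K_\eps}\eps(k+\obst)$ and $\O=\mathbb{T}^3$. Because $(2\eps)^{-1}\in\Z$, the cubes $\eps(k+Q)$, $k\in(2\Z)^3$, tile the torus exactly. (The cells must be taken centered at points of $2\eps\Z^3$, since $Q=(-1,1)^3$ has side $2$. Otherwise the cubes $\eps(k+\overline Q)$ with $k\in\Z^3$ overlap and we only use a covering subfamily, which changes nothing.) On the torus every such cube lies in $\O$, so each cell $\eps(k+Q)$ contains a hole $\eps(k+\obst)$. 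Extend $\vu\in W^{1,p}_0(\O_\eps)$ by zero to $\tilde\vu\in W^{1,p}(\O)$. Then $\nabla\tilde\vu=\widetilde{\nabla\vu}$, and $\tilde\vu$ vanishes on every hole.

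First we prove a reference-cell inequality. Let $V=\{w\in W^{1,p}(Q): w=0 \text{ on } \obst\}$; the trace makes sense because $\obst$ is smooth with nonempty interior. The claim is
\begin{align*}
\|w\|_{L^p(Q)}\le C_0\|\nabla w\|_{L^p(Q)}\qquad\text{for all } w\in V .
\end{align*}
The proof is by compactness. Suppose not. Then there are $w_n\in V$ with $\|w_n\|_{L^p(Q)}=1$ and $\|\nabla w_n\|_{L^p(Q)}\to0$. By Rellich--Kondrachov on the Lipschitz cube $Q$, a subsequence converges strongly in $L^p$ to some $w$ with $\nabla w=0$. Since $Q$ is connected, $w$ is constant. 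Since $w_n=0$ a.e.\ on $\operatorname{int}\obst$, a set of positive measure, the limit satisfies $w=0$ there, so $w\equiv0$. This contradicts $\|w\|_{L^p(Q)}=1$. Alternatively, one can avoid compactness by integrating along rays from points of a small ball inside $\obst$; this gives an explicit constant.

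Next we rescale to each cell. For a fixed cell, set $w(y)=\tilde\vu(\eps(k+y))$, where $k$ is the cell's center index. Then $w\in V$, and a change of variables gives $\|w\|_{L^p(Q)}^p=\eps^{-3}\|\tilde\vu\|^p_{L^p(\eps(k+Q))}$ and $\|\nabla w\|^p_{L^p(Q)}=\eps^{p-3}\|\nabla\tilde\vu\|^p_{L^p(\eps(k+Q))}$. The reference inequality therefore becomes
\begin{align*}
\|\tilde\vu\|^p_{L^p(\eps(k+Q))}\le C_0^p\,\eps^p\,\|\nabla\tilde\vu\|^p_{L^p(\eps(k+Q))}.
\end{align*}
Summing over all cells of the tiling (or the covering, with bounded overlap) gives $\|\vu\|_{L^p(\O_\eps)}\le C\eps\|\nabla\vu\|_{L^p(\O_\eps)}$. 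The constant $C$ depends only on $p$, $Q$ and $\obst$, not on $\eps$.

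I expect no serious obstacle. The only points needing care are the geometric bookkeeping that every cell contains a hole, which holds because the torus and the condition $(2\eps)^{-1}\in\Z$ make the tiling exact, and the justification that the zero extension lies in $W^{1,p}$ with vanishing trace on the holes, which holds because $\vu\in W^{1,p}_0(\O_\eps)$.
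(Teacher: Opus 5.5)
Your argument is correct and is the standard cell-by-cell rescaling proof (Tartar's lemma). The paper does not prove this estimate itself but cites Allaire for it, and Allaire's proof is this argument; on the torus there are no boundary cells, so the reference-cell inequality, the $\eps$-scaling and the summation over the tiling are the whole proof. Your remark that the cells should be centered at $2\eps\Z^3$ (because $Q=(-1,1)^3$ has side $2$) fixes a genuine inconsistency in the paper's indexing. As you note, keeping only that subfamily of holes enlarges the domain, so it does not affect the inequality.
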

Next, we report the standard Korn's inequality:
\begin{lemma}\label{lem:Korn}
    For any $\vu \in W_0^{1,p}(\O_\eps; \R^3)$, we have
    \begin{align*}
        \|\nabla \vu\|_{L^p(\O_\eps)} \lesssim \|\Dv{u}\|_{L^p(\O_\eps)}.
    \end{align*}
\end{lemma}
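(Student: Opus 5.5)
We prove Korn's inequality in Lemma~\ref{lem:Korn} by reducing it to Korn's inequality on the whole torus, using the zero extension. The point is that the constant must not depend on $\eps$. This is automatic once we never work on the perforated geometry directly: for $\vu \in W_0^{1,p}(\O_\eps;\R^3)$, the zero extension $\tilde\vu$ belongs to $W^{1,p}(\O;\R^3)$ with $\Omega=\mathbb{T}^3$. Moreover,
\begin{align*}
\nabla\tilde\vu = \widetilde{\nabla\vu}, \qquad \D(\tilde\vu)=\widetilde{\Dv{u}},
\end{align*}
so the norms of $\nabla\tilde\vu$ and $\D(\tilde\vu)$ on $\O$ coincide with those of $\nabla\vu$ and $\Dv{u}$ on $\O_\eps$. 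It therefore suffices to prove $\|\nabla \vc w\|_{L^p(\O)}\lesssim\|\D(\vc w)\|_{L^p(\O)}$ for $\vc w\in W^{1,p}(\mathbb{T}^3;\R^3)$, with a constant depending only on $p$. We take $1<p<\infty$ throughout; at $p=1$ and $p=\infty$ Korn's inequality fails.

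On the torus we argue through Fourier multipliers. For smooth $\vc w$, which suffices by density, we have the identity
\begin{align*}
\partial_j\partial_k w_i = \partial_j \D_{ik}(\vc w) + \partial_k \D_{ij}(\vc w) - \partial_i \D_{jk}(\vc w).
\end{align*}
Hence $\Delta \partial_k w_i$ is a linear combination of second derivatives of the entries of $\D(\vc w)$. Applying $\Delta^{-1}$ on mean-zero functions, each entry of $\nabla\vc w$ becomes a finite sum of Riesz-type operators $\partial_a\partial_b\Delta^{-1}$ applied to entries of $\D(\vc w)$. This is legitimate because $\nabla\vc w$ has zero mean on the torus. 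Those operators are bounded on $L^p(\mathbb{T}^3)$ for $1<p<\infty$, by the Marcinkiewicz multiplier theorem or by transference from $\R^3$, which gives the claim.

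Alternatively, one could avoid Fourier analysis: pick a bounded Lipschitz domain $U$ with $\O_\eps\subset U$, extend $\vu$ by zero to $W^{1,p}_0(U)$, and invoke the classical Korn inequality for $W^{1,p}_0$ fields. For $W_0^{1,p}$ functions that inequality holds on any domain, with the same multiplier constant as on $\R^3$.

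The main step requiring care is the $L^p$ boundedness of the singular integral operators, which is where $p\neq 1,\infty$ is needed. Everything else, in particular the $\eps$-independence, is immediate from the zero extension, since the estimate never sees the holes. For $p=2$ the multiplier step can be skipped entirely: integrating by parts twice and using $\div$-terms gives directly $2\|\D(\vc w)\|_2^2=\|\nabla \vc w\|_2^2+\|\div \vc w\|_2^2\ge\|\nabla\vc w\|_2^2$.
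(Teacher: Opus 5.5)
Your proof is correct for $1<p<\infty$. That is the right range: as you note, Korn fails at $p\in\{1,\infty\}$, and the paper's statement leaves this restriction implicit.

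The paper does not prove Lemma~\ref{lem:Korn}. It calls it the standard Korn inequality and refers to Allaire's work, so your argument supplies what the citation hides rather than offering a competing route. Two things come out of your route explicitly:
\begin{itemize}
\item The zero extension places everything on the fixed torus $\mathbb{T}^3$, where the holes play no role, so the $\eps$-independence of the constant is automatic.
\item The periodic Riesz-transform argument handles all $p\in(1,\infty)$ at once. Its ingredients are the second-derivative identity, inversion of $\Delta$ on mean-zero functions, and $L^p(\mathbb{T}^3)$-boundedness of the multipliers $\xi_a\xi_b/|\xi|^2$ (e.g.\ by de Leeuw transference after smoothing the symbol near the origin so it is continuous). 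Your identity $2\|\D(\vc w)\|_{L^2}^2=\|\nabla\vc w\|_{L^2}^2+\|\div\vc w\|_{L^2}^2$ covers only the Hilbert case $p=2$.
\end{itemize}
The general-$p$ version matters, because the paper also needs the lemma for $p>2$. In Section~\ref{sec:hom} it bounds $\|\nabla\vv_\eps\|_{L^p}$ using only the uniform bound on $\eps^2\We_1^{p-2}\|\Dv{v_\eps}\|_{L^pL^p}^p$.

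One caveat concerns your alternative argument via a bounded Lipschitz domain $U\supset\O_\eps$. With $\O=\mathbb{T}^3$, the set $\O_\eps$ does not sit inside a bounded $U\subset\R^3$ in a way compatible with zero extension. If you realize $\O_\eps$ inside a fundamental cell, fields in $W_0^{1,p}(\O_\eps)$ need not vanish on the cell faces, so their zero extension to $U$ is not in $W^{1,p}(U)$. That variant only fits the bounded-domain setting of the remark after Theorem~\ref{thm:wk-str}. On the torus you need the periodic multiplier argument, which your main proof already provides.
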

Both results can be found in \cite{Allaire1989, Allaire1990a}.
We now introduce well-behaved test functions:
\begin{lemma}[see {\cite[Section~4.1]{HoferNecasovaOschmann2025}}] \label{lem:Weps}
    There exist matrix-valued functions $W_\eps \in W^{1,\infty}(\O; \R^{3 \times 3})$ and vector-valued functions $Q_\eps \in W^{1,\infty}(\O_\eps; \R^3)$ such that:
    \begin{enumerate}
        \item $\div W_\eps = 0$, $W_\eps = 0$ in $\O \setminus \O_\eps$, $\eps^2 (-\Delta W_\eps + \nabla Q_\eps) = M$, where $M \in \R^{3 \times 3}$ is a symmetric positive definite matrix (called the \emph{resistance matrix}\footnote{Note that in the notation of \cite{HoferNecasovaOschmann2025}, we have $M = \mathcal{K}^{-1}$, where $\mathcal{K}$ is the so-called \emph{permeability matrix}.}).
        \item We have
        \begin{align*}
            \|W_\eps\|_{L^\infty(\O)} + \eps \|\nabla W_\eps\|_{L^\infty(\O)} + \eps \|Q_\eps\|_{L^\infty(\O_\eps)} + \eps^2 \|\nabla Q_\eps\|_{L^\infty(\O_\eps)} \lesssim 1.
        \end{align*}
        \item We have
        \begin{align*}
            \|W_\eps - \mathbb{I}\|_{[W^{1,1}(\O)]'} \lesssim \eps .
        \end{align*}
    \end{enumerate}
\end{lemma}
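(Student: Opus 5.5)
The functions will come from the classical periodic cell problem of Tartar and Allaire, rescaled to size $\eps$. On the unit cell $Q$ with the particle $\obst$ removed, and for each $j \in \{1,2,3\}$, we solve the Stokes problem
\begin{align*}
    -\Delta \vc w_j + \nabla q_j = \vc e_j \ \text{in } Q\setminus\obst, \qquad \div \vc w_j = 0 \ \text{in } Q\setminus\obst, \qquad \vc w_j = 0 \ \text{on } \del\obst,
\end{align*}
with $\vc w_j, q_j$ $Q$-periodic. We extend $\vc w_j$ by zero into $\obst$. Since $\obst$ is smooth and compactly contained in $Q$, elliptic regularity for Stokes gives $\vc w_j, q_j \in C^\infty(\overline{Q\setminus\obst})$. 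In particular $\vc w_j \in W^{1,\infty}$ of the periodic cell, and $q_j$ and $\nabla q_j$ are bounded.

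We set $K := \fint_Q W$, where $W$ has columns $\vc w_j$. Testing the equation for $\vc w_i$ with $\vc w_j$ gives $K_{ij} = \fint \nabla \vc w_i : \nabla \vc w_j$. So $K$ is symmetric and positive semidefinite. It is in fact definite: if $\xi^T K \xi = 0$, then $\sum_j \xi_j \vc w_j$ is constant and vanishes on $\obst$, hence is zero, which contradicts $\xi \neq 0$ by the equation. We then define $M := K^{-1}$. For the rescaled functions we first normalize $\widehat W := W M$ and $\widehat q := q M$ (columns combined with $M$), so that $-\Delta \widehat W + \nabla \widehat q = M$. We then put
\begin{align*}
    W_\eps(x) := \widehat W(x/\eps), \qquad Q_\eps(x) := \eps^{-1}\widehat q(x/\eps),
\end{align*}
periodized over the cells $\eps(k+Q)$. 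The condition $(2\eps)^{-1}\in\Z$ ensures these cells tile the torus exactly, so every cell carries a hole and $\O_\eps$ is exactly the complement of the rescaled holes. A direct computation gives $\eps^2(-\Delta W_\eps + \nabla Q_\eps) = M$, $\div W_\eps = 0$, and $W_\eps = 0$ on the holes. Property (2) follows immediately from the scaling: $\|W_\eps\|_\infty \lesssim 1$, $\|\nabla W_\eps\|_\infty \lesssim \eps^{-1}$, $\|Q_\eps\|_\infty \lesssim \eps^{-1}$ and $\|\nabla Q_\eps\|_\infty \lesssim \eps^{-2}$. Note that (1) as stated requires a symmetric positive definite $M$, and under this construction $M$ is symmetric positive definite.

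For property (3), as literally stated, $\fint \widehat W = KM = \mathbb I$. We set $\Phi := \widehat W - \mathbb I$, which is periodic, bounded, and has zero cell mean. We solve $\Delta \Psi = \Phi$ on the periodic cell; then $\Psi$ is periodic with $\nabla\Psi \in L^\infty$, since $\Phi$ is bounded. Writing $\Phi(x/\eps) = \eps\,\div_x\big[(\nabla\Psi)(x/\eps)\big]$ and integrating by parts against $\phi \in W^{1,1}(\O)$ on the torus gives
\begin{align*}
    \Big|\int_\O (W_\eps - \mathbb I) : \phi\Big| = \eps \Big|\int_\O (\nabla\Psi)(x/\eps) : \nabla\phi\Big| \le \eps\|\nabla \Psi\|_\infty \|\phi\|_{W^{1,1}}.
\end{align*}
This yields the dual-norm bound $\lesssim \eps$ with no boundary terms, because we are on the torus with an exact tiling.

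The main subtlety is the regularity and normalization step: we need $\nabla \vc w_j$ and $q_j$ bounded up to $\del\obst$, which uses smoothness of $\obst$ together with the Stokes regularity theory. We also need to fix the additive constant of $q_j$ (for instance mean zero) so that $Q_\eps$ satisfies the $L^\infty$ bound.
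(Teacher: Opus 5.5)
Your proposal is correct. It is the classical Tartar cell-problem construction, rescaled to the $\eps$-cells and right-multiplied by $M=K^{-1}$ so that the cell averages equal $\mathbb{I}$; this is what the paper imports from \cite[Section~4.1]{HoferNecasovaOschmann2025} without giving its own proof. Your checks of (1)--(3) are sound: definiteness of $K$ via periodicity of the pressure, the scaling bounds, and the $O(\eps)$ dual estimate via the periodic Poisson corrector $\Psi$ (a cellwise Poincar\'e--Wirtinger argument would work equally well).
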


The proof of our homogenization result rests on the idea to utilize a test function in the momentum equation of the form $W_\eps \phi$ with smooth $\phi$.
However, such function is in general not divergence-free.
To overcome this drawback, we recall a result from \cite[Lemma~2.3]{Hoefer2022}:
\begin{lemma}\label{lem:Bog-type}
    There exists an operator $B_\eps : W_0^{1,p}(\O; \R^3) \to W_0^{1,p}(\O_\eps; \R^3)$ for all $p \in [1,\infty)$ such that for all $\vu \in W_0^{1,p}(\O; \R^3)$ with $\div \vu = 0$, we have
    \begin{align*}
        \div B_\eps(\vu) = W_\eps : \nabla \vu, && \eps^{-1} \|B_\eps(\vu)\|_{L^p} + \|\nabla B_\eps(\vu)\|_{L^p} \lesssim \|(W_\eps - \mathbb{I}) : \nabla \vu\|_{L^p}.
    \end{align*}
\end{lemma}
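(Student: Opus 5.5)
The plan is to build $B_\eps$ cell by cell, using local Bogovskii operators on the rescaled perforated reference cell. The key observation is that for divergence-free $\vu$ the target divergence can be rewritten as
\begin{align*}
W_\eps : \nabla \vu = (W_\eps - \mathbb{I}) : \nabla \vu,
\end{align*}
since $\mathbb{I} : \nabla \vu = \div \vu = 0$. We therefore need to solve $\div \vc w = g_\eps$ with $g_\eps := (W_\eps - \mathbb{I}):\nabla \vu$ and $\vc w \in W_0^{1,p}(\O_\eps)$, with bounds in terms of $\|g_\eps\|_{L^p}$ only.

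First, decompose $\O$ (up to a boundary layer) into the cubes $\eps(k+Q)$, $k \in K_\eps$. On each perforated cell $Q_k^\eps := \eps(k + Q\setminus \obst)$ we want to apply the Bogovskii operator $\mathcal B_k$ of the domain $Q_k^\eps$, which maps mean-zero $L^p$ functions to $W_0^{1,p}(Q_k^\eps)$. By scaling from the fixed Lipschitz reference domain $Q\setminus\obst$ (connected, since $\obst\Subset Q$), it satisfies
\begin{align*}
\|\nabla \mathcal B_k g\|_{L^p} \lesssim \|g\|_{L^p}, \qquad \|\mathcal B_k g\|_{L^p} \lesssim \eps \|g\|_{L^p},
\end{align*}
with constants independent of $\eps$ and $k$. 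The second bound follows from the Poincaré inequality on the cell with constant $\eps$, consistent with Lemma~\ref{lem:Poinc}.

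The compatibility condition $\int_{Q_k^\eps} g_\eps = 0$ is exactly where the structure of $W_\eps$ enters. Writing $\nabla \vu = \nabla\cdot$ appropriately, we have $(W_\eps)_{ij}\partial_j u_i = \partial_j((W_\eps)_{ij} u_i)$ because $\div W_\eps = 0$ (understood column/row-wise as in Lemma~\ref{lem:Weps}). Hence
\begin{align*}
\int_{Q^\eps_k} W_\eps:\nabla\vu = \int_{\partial(\eps(k+Q))} W_\eps^T\vu\cdot n,
\end{align*}
using $W_\eps=0$ on the holes. This boundary flux need not vanish cell-wise, so the cell means are not zero in general. I would therefore first subtract, on each cell, the mean $m_k := \fint_{Q_k^\eps} g_\eps$, and correct it using a second construction.

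For this correction, the function $\sum_k m_k 1_{Q_k^\eps}$ has total integral $\int_\O W_\eps:\nabla\vu = 0$, and $\O_\eps$ is connected. I would handle it by the standard restriction-operator trick (Tartar/Allaire), which is the part I expect to be the main obstacle. Concretely, solve $\div \vc z = \sum_k m_k 1_{\eps(k+Q)}$ on the whole domain $\O$ via the global Bogovskii operator, and then apply Allaire's restriction operator $R_\eps$ to push $\vc z$ into $W_0^{1,p}(\O_\eps)$. This operator preserves divergence on cells in the sense that $\div R_\eps \vc z$ has the same cell integrals as $\div \vc z$. It obeys $\|\nabla R_\eps \vc z\|_{L^p}\lesssim \|\nabla \vc z\|_{L^p}+\eps^{-1}\|\vc z\|_{L^p}$, and the $\eps^{-1}\|\vc z\|$ term needs the mean values to be $O(1)$-controlled. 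To control this, I would rather choose $\vc z$ locally in $\eps$-neighbourhoods, using that $|m_k|\lesssim \eps^{-3/p}\|g_\eps\|_{L^p(Q_k^\eps)}$. The cleaner alternative, which is likely what Höfer does, is to work directly with the flux form: set $\vc w := R_\eps\big((W_\eps-\mathbb I)^T\vu\big)$-type corrections, and then fix the remaining mean-zero parts cell-wise with $\mathcal B_k$.

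Finally, summing the cell contributions in $\ell^p$ gives
\begin{align*}
\eps^{-1}\|B_\eps \vu\|_{L^p}+\|\nabla B_\eps\vu\|_{L^p}\lesssim \|(W_\eps-\mathbb I):\nabla\vu\|_{L^p}.
\end{align*}
Linearity of $B_\eps$ and its independence of $p$ come from the fact that the construction uses only linear, $p$-independent operators.
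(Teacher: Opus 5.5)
The paper gives no proof here; it cites \cite[Lemma~2.3]{Hoefer2022}. There is a genuine gap in your sketch. Your reduction is fine up to the point you flag yourself: remove the cell means $m_k$ with rescaled Bogovskii operators on the perforated cells, then correct $h=\sum_k m_k 1_{Q_k^\eps}$. That last step is the whole content of the lemma, and neither of your routes closes it. A global Bogovskii field $\vc z$ followed by a restriction operator costs $\eps^{-1}\|\vc z\|_{L^p}$, and $\|\vc z\|_{L^p}$ is governed by a negative norm of $h$, not by $\eps\|g_\eps\|_{L^p}$.

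In fact, the information you keep (zero total mass and $|m_k|\lesssim\eps^{-3/p}\|g_\eps\|_{L^p(Q_k^\eps)}$) can never suffice. For every $\vc w\in W_0^{1,p}(\O_\eps)$ and $\psi\in W^{1,p'}(\O)$, Lemma~\ref{lem:Poinc} gives
\[ \Big|\int_\O \div\tilde{\vc w}\,\psi\dd x\Big|\le\|\vc w\|_{L^p}\|\nabla\psi\|_{L^{p'}}\lesssim\eps\|\nabla\vc w\|_{L^p}\|\nabla\psi\|_{L^{p'}}. \]
So if the $m_k$ sample a fixed smooth mean-zero function at the cell centres, every $\vc w$ with $\div\vc w=h$ has $\|\nabla\vc w\|_{L^p}\gtrsim\eps^{-1}\|h\|_{L^p}$.

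The missing idea is the flux structure. Write $m_k|Q_k^\eps|=\int_{\partial(\eps(k+Q))}(W_\eps-\mathbb I)\vu\cdot n$ as a sum of face fluxes. Since $W_\eps$ is cell-periodic, divergence-free on all of $\O$ and has cell average $\mathbb I$, the flux of $(W_\eps-\mathbb I)\vc c$ through \emph{each single face} vanishes for every constant $\vc c$. After subtracting the cell average of $\vu$, the scaled trace and Poincar\'e inequalities bound each face flux by $\eps^{3-3/p}\|\nabla\vu\|_{L^p}$ on the adjacent cell. Transport these fluxes between neighbouring perforated cells with a fixed rescaled reference field (a Bogovskii solution on two adjacent reference cells). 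Only then apply your cellwise Bogovskii step to the now mean-free remainders. This yields $\|\nabla B_\eps\vu\|_{L^p}\lesssim\|\nabla\vu\|_{L^p}$ for $1<p<\infty$. Bogovskii's operator is not bounded $L^1\to W^{1,1}$, so $p=1$ is not covered by this route.

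Second, even after this repair, your closing ``summing in $\ell^p$'' line claims more than the construction delivers. The right-hand side you get is $\|\nabla\vu\|_{L^p}$, not $\|(W_\eps-\mathbb I):\nabla\vu\|_{L^p}$. By the display above, the stated bound forces $|\int_\O g_\eps\psi\dd x|\lesssim\eps\|g_\eps\|_{L^p}\|\nabla\psi\|_{L^{p'}}$. This does not follow from the properties of $W_\eps$ your argument uses. Take $W(y)=\mathrm{diag}(a(y_2),b(y_3),c(y_1))$, which is periodic, divergence-free, has cell average $\mathbb I$ and vanishes on a box. Take $\vu=\mathrm{curl}\big(\eps^2\eta(x)\Psi(x_2/\eps)\vc e_3\big)$ with $\Psi'=a-1$. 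Then $g_\eps=\eps\,\partial_1\eta\,(a-1)(a-b)+O(\eps^2)$, so $\|g_\eps\|_{L^p}\lesssim\eps$. Yet $\int_\O g_\eps\psi\dd x$ equals $\eps$ times the cell average of $(a-1)^2$ times $\int_\O\partial_1\eta\,\psi\dd x$, up to $o(\eps)$, hence is of exact order $\eps$.

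The weaker bound is all the paper actually uses. Section~\ref{sec:hom} only needs $\|\nabla B_\eps\phi\|_{L^p}\lesssim 1$ and $\|B_\eps\phi\|_{L^p}\lesssim\eps$. In Section~\ref{sec:REI} it only turns an $\eps^4$ into an $\eps^2$ that already appears in the final estimate.
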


\section{Weak solutions and main results}\label{sec:wkSol+result}
We begin with the definition of weak solutions to our system, existence of which was shown in \cite{KremlPokornySalom2014}:
\begin{defin}\label{def:wkSol}
    Let $D\subseteq \Omega$ be a domain with Lipschitz boundary, $\eps>0$, $\vv_0 \in L^2(D; \R^3)$ with $\div \vv_0 = 0$, $\T_0 \in L^2(D; \R^{3 \times 3})$ with $\T_0^\top = \T_0$, and $\vc f\in L^2((0,T)\times D; \R^3)$.
    Moreover, suppose
    \begin{align*}
        &\frac65 < p \leq 2, && q \geq 4, \text{ or} \\
        &2< p, && q > \frac{2p}{p-1}.
    \end{align*}
    Then, we call a couple $(\vv, \T)$ a \emph{weak solution} on $D$ to the system \eqref{NSE} if:
    \begin{itemize}
        \item The solution belongs to
        \begin{align*}
            &\vv \in L^\infty(0,T; L^2(D;\R^3)) \cap L^{\max\{2,p\}}(0,T; W_0^{1,\max\{2,p\}}(D;\R^3)) , \ \div \vv = 0, \\
            &\T \in L^\infty(0,T; L^2(D;\R^{3\times 3})) \cap L^q(0,T; W^{1,q}(D;\R^{3\times 3})), \ \del_n \T |_{\del D} = 0, \ \T^\top = \T, \\
            &\del_t \vv \in L^\sigma (0,T; [W_0^{1, \sigma'}(D;\R^3)]'), \ \text{for some } 1 \leq \sigma \leq \frac56 p, \\
            &\del_t \T \in L^{q'}(0,T; [W^{1,q}(D;\R^{3\times 3})]').
        \end{align*}

        \item For almost all $t \in (0,T)$ and all solenoidal $\phi \in C_c^\infty(D; \R^3)$, we have
        \begin{align*}
            &\eps^\lambda \langle \del_t \vv(t), \phi \rangle + \int_{D} \big( \eps^2 \mu(\We_1 \Dv{\vv}(t)) \Dv{\vv}(t) - \eps^\lambda \vv(t) \otimes \vv(t) \big) : \nabla \phi \dd x \\
            &= \int_{D} \vc f \cdot \phi + \eps^\xi \div \T(t) \cdot \phi \dd x .
        \end{align*}

        \item For almost all $t \in (0,T)$ and all symmetric tensor fields $\psi \in C^\infty(\overline{D};\R^{3\times 3})$, we have
        \begin{align*}
            &\eps^\kappa \langle \del_t \T(t), \psi \rangle + \int_{D} \eps^\kappa (\vv(t) \cdot \nabla) \T(t) : \psi \dd x \\
            &+ \int_{D} \eps_1 \eps^\kappa \gamma(\nabla \T(t)) \nabla \T(t) : \nabla \psi + \T : \psi \dd x \\
            &= 2 (1-\beta_\eps) \int_{D} \Dv{v}(t) : \psi \dd x - \eps^\kappa \int_{D} \vc B(\vv(t),\T(t)) : \psi \dd x .
        \end{align*}
        \item The initial data are attained in a weak sense, i.e., for any solenoidal $\phi \in C_c^\infty(D;\R^3)$, and any symmetric tensor field $\psi \in C^\infty(\overline{D};\R^{3\times 3})$, we have
        \begin{align*}
            \lim_{t \to 0} \int_{\O_\eps} \vv(t) \cdot \phi \dd x = \int_{\O_\eps} \vv_0 \cdot \phi \dd x, &&
            \lim_{t \to 0} \int_{\O_\eps} \T(t) : \psi \dd x = \int_{\O_\eps} \T_0 : \psi \dd x .
        \end{align*}
    \end{itemize}
\end{defin}
We note that in \cite{KremlPokornySalom2014}, the weak solution is defined with the term $\eps^\kappa \int_{D} \vc B(\vv(t),\T(t)) : \psi \dd x$ replaced by $- \eps^\kappa \int_{D} \vv(t) \cdot \div \vc A(\T(t), \psi) \dd x$, where $\vc A(\T, \psi) = \psi \T - \T \psi + a (\T \psi + \psi \T)$.
By integration by parts, both terms agree, and we have opted to keep the $\vc B$-term which resembles the original equation more closely.
We also note that by an approximation procedure, one might use $\psi=\T(t)$ itself as a test function, in which case $\vc A(\T,\T)=2a\T^2$ and hence
\begin{align}\label{BAconversion}
 \int_D \vc B(\vv(t),\T(t)):\T(t)\dd x = -2a\int_D \vv(t)\cdot \div \T^2(t) \dd x.   
\end{align}
Moreover, the regularity for $\vv$ obtained in \cite{KremlPokornySalom2014} is $\vv \in L^\infty L^2 \cap L^p W_0^{1,p}$, which does not imply our imposed regularity $\vv \in L^2 W_0^{1,2}$ if $p<2$. However, the form of $\mu$ in \eqref{assMu} enables us to repeat the proof of \cite{KremlPokornySalom2014} and obtain additionally $\vv \in L^2 W_0^{1,2}$.\\

Now we are in the position to formulate our main results.

\begin{theorem}[Qualitative convergence] \label{thm:quali}
    Suppose that we are given $\vc f\in L^2((0,T)\times\O;\R^3)$ and for each $\eps\in (0,1)$ initial data $(\vv_{\eps 0}, \T_{\eps 0}) \in L^2(\O_\eps;\R^3 \times \R^{3\times 3})$ satisfying uniformly in $\eps\in (0,1)$
    \begin{align}\label{init}
        \eps^\lambda \|\vv_{\eps 0}\|_{L^2}^2 + \eps^\kappa \|\T_{\eps 0}\|_{L^2}^2 \lesssim 1.
    \end{align}
    For $\eps\in (0,1)$, let $(\vv_\eps, \T_\eps)$ be a corresponding weak solution to system \eqref{NSE} on $\O_\eps$.
    Assume that
    \begin{align}\label{assCsts}
        \lambda > 1, && \We_1 / \eps \to 0, && \eps^{\xi(q-1) - \kappa} / \eps_1 \to 0, \ \xi, \kappa \in \R, \ \xi < 2.
    \end{align}
    Assume moreover that in the case $q\ne 4$ it holds
    \begin{align}\label{ass_eps_qn4}
        (\eps^{2(\kappa+\xi)(q-1)} + \eps^{(\kappa+\xi)(q-2)} ) / \eps_1^2 + ( \eps^{2(\kappa+\xi)(q-2)} + \eps^{(\kappa+\xi)(q-4) + 2q\xi} ) / \eps_1^4 \lesssim 1,
    \end{align}
    and in the case $q=4$ it holds
    \begin{align}\label{ass_eps_q4}
        (\eps^{6(\kappa+\xi)} + \eps^{2(\kappa+\xi)} ) / \eps_1^2 \lesssim 1, && (\eps^{\kappa+\xi} + \eps^{\kappa+3\xi} ) / \eps_1 \to 0.
    \end{align}
    Then there are $\vv\in L^2((0,T) \times \O;\R^3)$ and $\pi\in L^2((0,T);W^{1,2}(\O;\R))$ such that $\tilde{\vv}_\eps \weak \vv$ in $L^2((0,T) \times \O;\R^3)$, and $(\vv,\pi)$ solves Darcy's law \eqref{Darcy} with the resistance matrix $M$ given by Lemma~\ref{lem:Weps}.
\end{theorem}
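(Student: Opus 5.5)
The plan is the standard oscillating test-function method, preceded by an energy estimate adapted to the coupled system.

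\textbf{Step 1: energy estimate.} Test the momentum equation with $\vv_\eps$. Test the stress equation with $\frac{\eps^\xi}{2(1-\beta_\eps)}\T_\eps$, a weight chosen so that the coupling terms $\eps^\xi\int \div\T_\eps\cdot\vv_\eps$ and $\eps^\xi\int \Dv{v_\eps}:\T_\eps$ cancel. Since $\div\vv_\eps=0$, the convective terms vanish. What remains on the right-hand side is:
\begin{itemize}
\item the forcing term, which Lemmas~\ref{lem:Poinc} and \ref{lem:Korn} bound by $\|\vc f\|_{L^2}\eps\|\Dv{v_\eps}\|_{L^2}$ and which is absorbed by $\eps^2\mu_0\|\Dv{v_\eps}\|^2$;
\item the $\vc B$-term. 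By \eqref{BAconversion} it equals $\eps^{\kappa+\xi}a\int\vv_\eps\cdot\div\T_\eps^2$, which is bounded by $\eps^{\kappa+\xi}\|\vv_\eps\|\,\|\T_\eps\|\,\|\nabla\T_\eps\|$ in suitable Lebesgue norms.
\end{itemize}

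\textbf{Step 2: closing the $\vc B$-term.} This is the main obstacle. I would absorb it into the dissipation $\eps_1\eps^{\kappa+\xi}\|\nabla\T_\eps\|_{L^q}^q$ (with $\Xi_\eps=1$) together with $\eps^{2}\|\nabla\vv_\eps\|_{L^2}^2$ and $\eps^\xi\|\T_\eps\|_{L^2}^2$. The tools are Hölder's inequality, the Sobolev/Gagliardo--Nirenberg inequality $\|\T\|_{L^\infty}\lesssim\|\T\|_{L^2}^{1-\theta}\|\T\|_{W^{1,q}}^\theta$ (valid since $q\ge4>3$), Poincaré in $\O_\eps$ for $\vv_\eps$, and Young's inequality with exponents matched to $q$. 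Tracking the powers of $\eps$ and $\eps_1$ after applying Young produces exactly the conditions \eqref{ass_eps_qn4}. For $q=4$ the exponents are borderline, and one instead needs a smallness condition \eqref{ass_eps_q4}, closed by Gronwall applied to $\eps^{\kappa+\xi}\|\T_\eps\|_{L^2}^2$ together with the data bound \eqref{init}. The outcome I aim for is
\begin{align*}
\eps^\lambda\|\vv_\eps\|_{L^\infty L^2}^2+\eps^2\|\nabla\vv_\eps\|_{L^2L^2}^2+\eps^{\xi}\|\T_\eps\|_{L^2L^2}^2+\eps_1\eps^{\kappa+\xi}\|\nabla\T_\eps\|_{L^qL^q}^q\lesssim1 .
\end{align*}
Combined with Poincaré (Lemma~\ref{lem:Poinc}), this yields $\|\tilde\vv_\eps\|_{L^2L^2}\lesssim1$, so $\tilde\vv_\eps\weak\vv$ along a subsequence. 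Moreover $\div\vv=0$ and $\vv$ vanishes in the normal direction at $\partial\O$; on the torus this is trivial.

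\textbf{Step 3: limit passage.} Take $\phi\in C_c^\infty((0,T)\times\O)$ with $\div\phi=0$, and use the test function $W_\eps\phi-B_\eps(\phi)$. It is solenoidal by Lemma~\ref{lem:Bog-type}, and $B_\eps(\phi)$ is $O(\eps)$ in $L^p$ with gradient $O(1)$ by Lemma~\ref{lem:Weps}(3) and interpolation. The terms are handled as follows.
\begin{itemize}
\item The viscous term $\eps^2\mu_0\int\nabla\vv_\eps:\nabla(W_\eps\phi)$ is rewritten using $\eps^2(-\Delta W_\eps+\nabla Q_\eps)=M$, which gives $\mu_0\int M\phi\cdot\vv_\eps$ plus remainders of order $\eps\|\nabla\vv_\eps\|\cdot\eps$. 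Here $Q_\eps$ pairs with $\div\vv_\eps=0$.
\item The nonlinear part $g$ is small. For $|\We_1\D|\le s_0$ it contributes $\eps^2\We_1|\D|^2|\nabla W_\eps|$, which after using $\eps|\nabla W_\eps|\lesssim1$ is of size $\We_1/\eps\to0$. The large-$|\D|$ part is treated as in \cite{HoeferLuOschmann2025}, using the $p$-dissipation.
\item The time-derivative and convective terms are $O(\eps^{\lambda-1})$ and $O(\eps^{\lambda-2}\cdot\eps)$ respectively, via $\|\vv_\eps\|_{L^4}$ interpolation, and both vanish since $\lambda>1$.
\item The polymeric term $\eps^\xi\int\T_\eps:\nabla(W_\eps\phi)$ is bounded by $\eps^{\xi-1}\|\T_\eps\|_{L^1}$. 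Via the equation for $\T_\eps$ and the dissipation bound, its size is controlled by $\eps^{\xi(q-1)-\kappa}/\eps_1\to0$, possibly after integrating by parts onto $\nabla\T_\eps$ and using the $L^q$ gradient bound.
\item The forcing term converges to $\int\vc f\cdot\phi$ by Lemma~\ref{lem:Weps}(3).
\end{itemize}
Passing to the limit gives $\mu_0M\vv=\vc f$ modulo gradients. De Rham's theorem then provides $\pi\in L^2W^{1,2}$, and Darcy's law \eqref{Darcy} follows.
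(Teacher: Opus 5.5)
The gap is in your Step 2, which you yourself flag as the main obstacle. You list tools there and assert that the exponents come out as in \eqref{ass_eps_qn4}, but those tools cannot close the estimate. The trilinear term $\eps^{\kappa+\xi}\int_{\O_\eps}|\vv_\eps||\T_\eps||\nabla\T_\eps|\dd x$ would have to be absorbed into $\eps^2\|\nabla\vv_\eps\|_{L^2L^2}^2$, into $\eps^{\xi}\|\T_\eps\|_{L^2L^2}^2$ (or $\eps^{\kappa+\xi}\|\T_\eps\|_{L^\infty L^2}^2$), and into $\eps^{\kappa+\xi}\eps_1\|\nabla\T_\eps\|_{L^qL^q}^q$. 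On a bounded domain, any Gagliardo--Nirenberg bound for $\|\T_\eps\|_{L^\infty}$ must contain a lower-order part $\|\T_\eps\|_{L^2}$, because the gradient does not see constant tensors. You are therefore left with a piece $\eps^{\kappa+\xi}\|\vv_\eps\|_{L^2}\|\T_\eps\|_{L^2}\|\nabla\T_\eps\|_{L^q}$. Measured against the three dissipative quantities, this piece is homogeneous of degree $\frac12+\frac12+\frac1q>1$. No choice of Young exponents absorbs a superlinear term, whatever powers of $\eps$ and $\eps_1$ sit in front. The Gronwall step you suggest for $q=4$ does not resolve this either. Its growth rate involves a norm of $\vv_\eps$ such as $\|\vv_\eps\|_{L^4}^2$, whose time integral is only $O(\eps^{-3/2})$ by scaled Poincar\'e--Sobolev. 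That bound is, moreover, available only once the estimate is already closed.

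The missing idea is the Kreml--Pokorn\'y--\v{S}alom trick used in Lemma~\ref{lem:unifBds}. Test the stress equation with the spatial mean $\T_M^\eps=|\O_\eps|^{-1}\int_{\O_\eps}\T_\eps\dd x$. The diffusion term drops out because $\nabla\T_M^\eps=0$, and the transport term and $\int_{\O_\eps}\Dv{v_\eps}\dd x$ vanish because $\vv_\eps=0$ on $\partial\O_\eps$. The resulting ODE bounds $\sup_{(0,\tau)}|\T_M^\eps|$ by its initial value plus $\int_0^\tau\|\vv_\eps\|_{L^2}\|\nabla\T_\eps\|_{L^q}\dd t$. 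Now split $\T_\eps=(\T_\eps-\T_M^\eps)+\T_M^\eps$ and apply Poincar\'e--Sobolev to the mean-free part; this is where $q\ge4$ enters. This removes $\|\T_\eps\|_{L^2}$ entirely and leaves $\|\vv_\eps\|\|\nabla\T_\eps\|$, $\|\vv_\eps\|\|\nabla\T_\eps\|^2$ and $\|\vv_\eps\|^2\|\nabla\T_\eps\|^2$, in $L^2L^2$ and $L^qL^q$ norms respectively. The first two terms are absorbed by Young, using the smallness in \eqref{ass_eps_q4} when $q=4$. The third is still superlinear, and the paper needs a second ingredient you also lack. The velocity energy inequality \eqref{estV} on its own gives $\eps^2\|\Dv{v_\eps}\|_{L^2L^2}^2\lesssim1+\eps^{2\xi}\|\nabla\T_\eps\|_{L^qL^q}^2$. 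This turns $\eps^{\kappa+\xi+2}\|\Dv{v_\eps}\|^2\|\nabla\T_\eps\|^2$ into second and fourth powers of $\|\nabla\T_\eps\|_{L^qL^q}$ only. These Young steps are what produce \eqref{ass_eps_qn4}--\eqref{ass_eps_q4}.

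Your Step 3 is essentially the paper's argument, with two slips. First, $L^4$ interpolation gives the convective term only $O(\eps^{\lambda-5/2})$. You need to pair $\|\vv_\eps\|_{L^{6/(3-2\theta)}}^2\lesssim\eps^{-2\theta}$ with $\|\nabla\vc w_\eps\|_{L^{3/(2\theta)}}\lesssim\eps^{-1}$ for small $\theta$. Second, $\eps^{\xi-1}\|\T_\eps\|_{L^1}\lesssim\eps^{\xi/2-1}$ does not vanish for $\xi<2$. The polymeric term must therefore be kept as $\eps^\xi\int\vc w_\eps\cdot\div\T_\eps$, which is your alternative. This gives $\eps^\xi(\eps^{\kappa+\xi}\eps_1)^{-1/q}=(\eps^{\xi(q-1)-\kappa}/\eps_1)^{1/q}\to0$.
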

\begin{rem}
    To determine which of the conditions in \eqref{ass_eps_qn4} is more restrictive, we compare the corresponding exponents. We have
    \begin{align*}
        2(\kappa+\xi)(q-1) \geq (\kappa+\xi)(q-2) \Leftrightarrow \kappa+\xi \geq 0,
    \end{align*}
    and
    \begin{align*}
        2(\kappa+\xi)(q-2) \geq (\kappa+\xi)(q-4) + 2q\xi \Leftrightarrow \kappa-\xi \geq 0 .
    \end{align*}
\end{rem}

\begin{theorem}[Quantitative convergence] \label{thm:quant}
    There exists $C>0$ such that the following holds.
    If in the situation of Theorem \ref{thm:quali} it holds $\vv \in W^{1,\infty}(0,T; L^2(\O;\R^3)) \cap L^2(0,T; W^{2,\infty}(\O;\R^3))$, then we have for all $\eps\in (0,1)$ that
    \begin{align*}
        \|\tilde{\vv}_\eps - \vv\|_{L^2L^2}^2 + \eps^\xi \|\T_\eps\|_{L^2L^2}^2 &\le  C\Big(\eps^\lambda \|\tilde{\vv}_{\eps 0} - \vv(0,\cdot)\|_{L^2}^2 + \eps^{\kappa+\xi} \|\T_{\eps 0}\|_{L^2}^2 + \eps^{2(\lambda-1)} + \eps^2 \\
        &\quad  + (\We_1 / \eps)^2 + [\eps^{\xi(q-1)-\kappa} / \eps_1]^\frac{1}{q-1} + \eps^{\kappa+\xi} / \eps_1^{\frac{1}{q-1}} + \eps^{\kappa+\xi} / \eps_1^{\frac{2}{q-2}} \Big).
    \end{align*}
\end{theorem}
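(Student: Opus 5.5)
We argue by a relative energy method, comparing $(\vv_\eps,\T_\eps)$ with the Darcy solution $\vv$ corrected by the oscillating test matrix of Lemma~\ref{lem:Weps}. Since $\vv$ does not vanish on the holes, we work with
\begin{align*}
    \vc U_\eps := W_\eps \vv - B_\eps(\vv) \in W_0^{1,2}(\O_\eps;\R^3), \qquad \div \vc U_\eps = 0,
\end{align*}
which is admissible by Lemma~\ref{lem:Bog-type}. By Lemma~\ref{lem:Weps}(2)--(3) and the regularity assumed on $\vv$, we have $\|\vc U_\eps - \vv\|$ of order $\eps$ in weak norms, together with $\|\nabla \vc U_\eps\|_{L^\infty}\lesssim \eps^{-1}$ and $\|B_\eps(\vv)\|_{L^2}+\eps\|\nabla B_\eps(\vv)\|_{L^2}\lesssim \eps$. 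The relative energy is
\begin{align*}
    E(t) = \frac{\eps^\lambda}{2}\|\vv_\eps - \vc U_\eps\|_{L^2}^2 + \frac{\eps^{\kappa+\xi}}{4(1-\beta_\eps)}\|\T_\eps\|_{L^2}^2 .
\end{align*}
We compare the stress with $0$, as suggested by the statement that $\T$ disappears in the limit.

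\textbf{Step 1: energy inequality.} Test the $\T$-equation by $\eps^\xi\T_\eps/(2(1-\beta_\eps))$. The term $\eps^\xi\int \Dv{v_\eps}:\T_\eps$ exactly cancels the coupling $\eps^\xi \div\T_\eps\cdot\vv_\eps$ in the momentum energy. This yields the energy inequality for $\tfrac{\eps^\lambda}{2}\|\vv_\eps\|^2 + \tfrac{\eps^{\kappa+\xi}}{4(1-\beta)}\|\T_\eps\|^2$ with dissipation
\begin{align*}
    \eps^2\mu(\cdot)\Dv{v_\eps}:\Dv{v_\eps} + \tfrac{\eps^\xi}{2}|\T_\eps|^2 + \eps_1\eps^{\kappa+\xi}\gamma|\nabla\T_\eps|^2 ,
\end{align*}
and a remaining term $\eps^{\kappa+\xi} a\int \vv_\eps\cdot\div\T_\eps^2$ coming from \eqref{BAconversion}.

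\textbf{Step 2: weak formulation tested by $\vc U_\eps$.} Using $\eps^2(-\Delta W_\eps+\nabla Q_\eps)=M$ and the Darcy equation $\mu_0 M\vv = \vc f-\nabla\pi$, the viscous term $\eps^2\mu_0\int \Dv{v_\eps}:\nabla \vc U_\eps$ reproduces $\mu_0\int M\vv\cdot\vv_\eps$ up to errors. The $Q_\eps$-terms integrate against the divergence-free $\vv_\eps$, and the commutator terms involving $\nabla\vv$ and $B_\eps$ are of size $\eps$. Combining this with Step 1 gives
\begin{align*}
    E(t) + c\int_0^t \Big( \eps^2\|\nabla(\vv_\eps-\vc U_\eps)\|^2 + \eps^\xi\|\T_\eps\|^2 + \eps_1\eps^{\kappa+\xi}\|\nabla\T_\eps\|_{L^q}^q \Big) \le E(0) + \sum R_i .
\end{align*}
By Lemma~\ref{lem:Poinc}, $\eps^2\|\nabla(\vv_\eps-\vc U_\eps)\|^2\gtrsim\|\vv_\eps-\vc U_\eps\|^2$, which controls $\|\tilde\vv_\eps-\vv\|_{L^2L^2}^2$ up to $O(\eps^2)$.

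\textbf{Step 3: the remainders $R_i$.} These are estimated one by one.
\begin{itemize}
    \item The time derivative $\eps^\lambda\partial_t\vc U_\eps$ and the convective term $\eps^\lambda\vv_\eps\otimes\vv_\eps:\nabla\vc U_\eps$, using $|\nabla \vc U_\eps|\lesssim\eps^{-1}$, give $\eps^{\lambda-1}\|\vv_\eps\|^2$-type terms. Young's inequality and absorption turn these into $\eps^{2(\lambda-1)}$, and $\lambda>1$ makes this small.
    \item The nonlinear viscosity part $g(\We_1|\Dv{v_\eps}|)$ is bounded via \eqref{assMu}: for small arguments it behaves like $\We_1|\D|$, giving, after testing against $\eps^2\nabla\vc U_\eps\sim\eps$ and absorbing, the contribution $(\We_1/\eps)^2$. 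The $p$-growth part is controlled by the monotonicity term.
    \item The coupling $\eps^\xi\int \T_\eps:\nabla\vc U_\eps$ does not cancel, since $\vc U_\eps\neq\vv_\eps$. It is bounded by $\eps^{\xi}\|\T_\eps\|\,\|\nabla \vc U_\eps\|$, but that is too large. Instead we use $\div \T_\eps$ tested with $\vc U_\eps$, i.e.\ $\eps^\xi\|\nabla\T_\eps\|_{L^q}\|\vc U_\eps\|_{L^{q'}}$, and interpolate against the diffusion dissipation $\eps_1\eps^{\kappa+\xi}\|\nabla \T_\eps\|_{L^q}^q$. Young with exponents $q,q'$ gives $(\eps^{q'\xi}/(\eps_1\eps^{\kappa+\xi})^{q'/q})$, which is exactly $[\eps^{\xi(q-1)-\kappa}/\eps_1]^{1/(q-1)}$.
\end{itemize}

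\textbf{Main obstacle.} The cubic term $\eps^{\kappa+\xi}a\int\vv_\eps\cdot\div\T_\eps^2$ is the hardest. We bound it by $\eps^{\kappa+\xi}\|\vv_\eps\|_{L^r}\|\T_\eps\|_{L^s}\|\nabla\T_\eps\|_{L^q}$, control $\T_\eps$ through Gagliardo--Nirenberg between the $L^2$ dissipation $\eps^\xi\|\T_\eps\|^2$ and the $L^q$ gradient dissipation, and control $\vv_\eps$ through the Darcy-scale bound $\|\vv_\eps\|_{L^2L^2}\lesssim1$. Young then produces the terms $\eps^{\kappa+\xi}/\eps_1^{1/(q-1)}$ and $\eps^{\kappa+\xi}/\eps_1^{2/(q-2)}$; the exponent bookkeeping is where I expect difficulty. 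The uniform bounds needed here come from the assumptions \eqref{ass_eps_qn4}--\eqref{ass_eps_q4} and Theorem~\ref{thm:quali}. Finally, Gronwall's inequality (with constants depending on $\|\vv\|_{W^{1,\infty}L^2\cap L^2W^{2,\infty}}$) closes the estimate, and initial data $E(0)$ yield the first two terms of the bound.
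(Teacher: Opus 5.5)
\textbf{Architecture and the cubic term.} Your architecture matches the paper's. You use a relative energy with $\vc U_\eps=W_\eps\vv-B_\eps(\vv)$ and the stress compared with $0$, which is Lemma~\ref{lem:REI} with $(\vu,\S)=(\vc U_\eps,0)$. The term $\mu_0M\vv$ cancels via $\eps^2(-\Delta W_\eps+\nabla Q_\eps)=M$ and Darcy's law. The coupling term $\eps^\xi\int\T_\eps:\D(\vc U_\eps)$ is integrated by parts and estimated by Young's inequality against the $\gamma$-dissipation, giving exactly $[\eps^{\xi(q-1)-\kappa}/\eps_1]^{1/(q-1)}$, and Gr\"onwall closes. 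The genuine gap is the cubic term $\eps^{\kappa+\xi}\int\T_\eps:\vc B(\vv_\eps,\T_\eps)$. Your sketched route does not produce the stated terms, so this is more than bookkeeping.

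If the zero-order part of $\T_\eps$ is controlled through the dissipation $\eps^\xi\|\T_\eps\|_{L^2}^2$, look already at the piece $\eps^{\kappa+\xi}\|\vv_\eps\|_{L^2}\|\T_\eps\|_{L^2}\|\nabla\T_\eps\|_{L^q}$. Young against $\eps^\xi\|\T_\eps\|_{L^2}^2$ and $\eps^{\kappa+\xi}\eps_1\|\nabla\T_\eps\|_{L^q}^q$ leaves $\eps^{\xi+2\kappa\frac{q-1}{q-2}}\eps_1^{-\frac{2}{q-2}}\|\vv_\eps\|_{L^2}^{\frac{2q}{q-2}}$. This is worse than $\eps^{\kappa+\xi}\eps_1^{-2/(q-2)}$ by the factor $\eps^{\kappa q/(q-2)}$ when $\kappa<0$, which \eqref{assCsts} allows. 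It also carries a power $\frac{2q}{q-2}>2$ of $\|\vv_\eps\|_{L^2}$, which $\|\vv_\eps\|_{L^2L^2}\lesssim1$ does not control. The sharp Gagliardo--Nirenberg exponent does not help: the leftover power of $\|\vv_\eps\|_{L^2}$ stays above $2$ for every $q\ge4$.

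The paper never uses the $L^2$-dissipation here. It reuses the mean-value splitting $\T_\eps=(\T_\eps-\T_M^\eps)+\T_M^\eps$ from Lemma~\ref{lem:unifBds}. There $\|\T_\eps-\T_M^\eps\|_{L^\infty}\lesssim\|\nabla\T_\eps\|_{L^q}$ since $q>3$, $\T_M^\eps$ is controlled by integrating the stress equation over $\O_\eps$, and $\|\vv_\eps\|_{L^2}\lesssim\eps\|\D(\vv_\eps)\|_{L^2}$ is combined with \eqref{unifBds}. This reduces the term in \eqref{REI_e2} to $\eps^{\kappa+\xi}(\|\nabla\T_\eps\|_{L^q}+\|\nabla\T_\eps\|_{L^q}^2)$. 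Young against $\eps^{\kappa+\xi}\eps_1\|\nabla\T_\eps\|_{L^q}^q$ alone then gives exactly $\eps^{\kappa+\xi}\eps_1^{-1/(q-1)}+\eps^{\kappa+\xi}\eps_1^{-2/(q-2)}$, because the prefactor $\eps^{\kappa+\xi}$ is never traded.

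A smaller point of the same kind concerns the $g$-term. It must be arranged as in Lemma~\ref{lem:REI}, with the monotone difference on the left and only $\eps^2 g(\We_1|\D(\vc U_\eps)|)\D(\vc U_\eps)$ on the right. Then $|g(s)|\lesssim s+s^{p-2}\mathbf{1}_{p>3}$ and $\eps\|\nabla\vc U_\eps\|_{L^r}\lesssim1$ give $(\We_1/\eps)^2$. Evaluating $g$ at $\D(\vv_\eps)$ does not close, since $\D(\vv_\eps)$ is not pointwise small. Absorbing the $p$-growth into the $p$-part of the monotone term only yields $(\We_1/\eps)^{p-2}$, which is weaker for $p<4$.

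\textbf{The final step.} Your last step fails, and cannot be repaired. You correctly note that $\vc U_\eps-\vv$ is small only in weak norms, yet you conclude that Poincar\'e controls $\|\tilde\vv_\eps-\vv\|_{L^2L^2}^2$ up to $O(\eps^2)$. But $\tilde\vv_\eps$ vanishes on the holes $\O\setminus\O_\eps$, which occupy a fixed volume fraction. Hence $\|\tilde\vv_\eps-\vv\|_{L^2L^2}^2\ge\|\vv\|_{L^2(0,T;L^2(\O\setminus\O_\eps))}^2$ stays of order $\|\vv\|_{L^2L^2}^2$. Meanwhile the right-hand side of the statement tends to $0$ in admissible regimes. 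For example, take $\vc f$ a nonzero constant, so $\vv=(\mu_0M)^{-1}\vc f$, with $\T_{\eps0}=0$, bounded $\vv_{\eps0}$, $\lambda=2$, $\We_1=\eps^2$, $\kappa=\xi=1$, $q=4$, $\eps_1=1$.

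The paper's proof ends with the same assertion, namely that ``it is easy to see that $\|\tilde\vv_\eps-\vv\|_{L^2L^2}^2\lesssim\eps^2\|\nabla(\tilde\vv_\eps-\tilde\vu_\eps)\|_{L^2L^2}^2+\eps^2$''. So the defect lies in the statement itself. What the relative energy argument, yours or the paper's, actually bounds is $\|\tilde\vv_\eps-W_\eps\vv\|_{L^2L^2}^2$, using $\|B_\eps(\vv)\|_{L^2}\lesssim\eps$. The difference $\tilde\vv_\eps-\vv$ is controlled only in a negative norm such as $L^2(0,T;H^{-1}(\O))$, via Lemma~\ref{lem:Weps}(3).
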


\begin{rem}
    As can be seen from the above quantitative bound, for appropriate $\kappa, \xi, \lambda,$ and $\eps_1$, we have $\tilde \vv_\eps \to \vv$ strongly in $L^2((0,T) \times \Omega)$. In particular, one might let $\eps_1 \to \infty$, meaning that infinite stress dissipation leads to a strong vanishing effect of the polymeric stress.
\end{rem}

Lastly, we will show the following weak--strong uniqueness result for system \eqref{NSE}, which might be of independent interest. To this end, we forget about the dependence on $\eps$, that is, we set $\eps=1$.
\begin{theorem}\label{thm:wk-str}
    Let $\eps=1$, and let $D\subseteq \Omega$ be a domain with Lipschitz boundary.
    Assume that $(\vv, \T)$ is a weak solution on $D$ to system \eqref{NSE}, corresponding to the data $(\vv_{0}, \T_{0}) \in L^2(D;\R^3 \times \R^{3\times 3})$ and $\vc f\in L^2((0,T)\times D;\R^3)$.
    Let moreover
    $$(\vu, \S) \in W^{1,\infty}(0,T; L^2(D; \R^3\times \R^{3\times 3})) \cap L^2(0,T; W^{2,\infty}(D;  \R^3\times\R^{3\times 3}))$$
    be a strong solution on $D$ to \eqref{NSE} corresponding to the same data.
    Then $(\vv, \T) = (\vu, \S)$.
\end{theorem}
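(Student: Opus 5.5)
We prove Theorem~\ref{thm:wk-str} by the relative energy method, in the same framework used for Theorem~\ref{thm:quant} but with $\eps=1$. We work with the weighted relative energy
\begin{align*}
    \mathcal E(t) = \tfrac12 \|\vv(t)-\vu(t)\|_{L^2(D)}^2 + \tfrac{1}{4(1-\beta)}\|\T(t)-\S(t)\|_{L^2(D)}^2 .
\end{align*}
With $\eps=1$ we have $\eps^\xi=\eps^\kappa=\eps^\lambda=1$. The weight $\frac{1}{2(1-\beta)}$ on the stress part is chosen so that the coupling terms $\int \div(\T-\S)\cdot(\vv-\vu)$ from the momentum equation and $\int \D(\vv-\vu):(\T-\S)$ from the stress equation cancel after integration by parts. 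This uses $\vv-\vu=0$ on $\del D$ and the symmetry of $\T-\S$.

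\textbf{Step 1: relative energy inequality for the weak solution.} We test the weak formulation of the momentum equation by $\vv$ and the stress equation by $\T$. Testing by $\T$ is justified by the approximation mentioned before \eqref{BAconversion}; testing by $\vv$ is admissible because $\vv\in L^{\max\{2,p\}}W_0^{1,\max\{2,p\}}$ and $\T\in L^qW^{1,q}$ with $q\ge4$. This yields the energy inequality.

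Next we test the weak equations by $\vu$ and $\S$, and subtract the strong equations tested by $\vv$ and $\T$. Both are admissible: $\vu,\S\in L^2W^{2,\infty}\cap W^{1,\infty}L^2$, and $\del_n\S=0$, $\vu=0$ on $\del D$. Combining these gives
\begin{align*}
    \mathcal E(t) &+ \int_0^t\!\!\int_D [\mu(\We_1\Dv v)\Dv v-\mu(\We_1 \Dv u)\Dv u]:(\Dv v-\Dv u) \\
    &+ \tfrac{\eps_1}{2(1-\beta)}[\gamma(\nabla\T)\nabla\T-\gamma(\nabla\S)\nabla\S]:\nabla(\T-\S) + \tfrac{1}{2(1-\beta)}|\T-\S|^2 \le \mathcal E(0)+\int_0^t \mathcal R,
\end{align*}
where $\mathcal E(0)=0$ because the data coincide. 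The left-hand side is coercive by \eqref{assMu} and \eqref{assGamma}: it controls $c\|\D(\vv-\vu)\|_{L^2}^2$, hence by Lemma~\ref{lem:Korn} also $\|\nabla(\vv-\vu)\|_{L^2}^2$, together with $c\,\eps_1\|\nabla(\T-\S)\|_{L^q}^q$.

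\textbf{Step 2: estimating the remainder.} The remainder $\mathcal R$ consists of convective and Oldroyd terms. The convective term in the momentum equation is standard: $\int ((\vv-\vu)\cdot\nabla)\vu\cdot(\vv-\vu)\le\|\nabla\vu\|_{L^\infty}\|\vv-\vu\|_{L^2}^2$. The stress transport term gives
\begin{align*}
\int((\vv-\vu)\cdot\nabla)\S:(\T-\S)\le\|\nabla\S\|_{L^\infty}\big(\|\vv-\vu\|_{L^2}^2+\|\T-\S\|_{L^2}^2\big),
\end{align*}
since $\int (\vv\cdot\nabla)(\T-\S):(\T-\S)=0$ by $\div\vv=0$.

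The main obstacle is the term $\vc B(\vv,\T)-\vc B(\vu,\S)$ tested against $\T-\S$. We split it as
\begin{align*}
\vc B(\vv,\T)-\vc B(\vu,\S)=\vc B(\vv-\vu,\S)+\vc B(\vv,\T-\S).
\end{align*}
The first part is bounded by $\|\S\|_{L^\infty}\|\nabla(\vv-\vu)\|_{L^2}\|\T-\S\|_{L^2}$, which Young's inequality absorbs into the dissipation plus $C\|\T-\S\|_{L^2}^2$. For the second part, the terms ${\mathbf W}\T-\T{\mathbf W}$ are orthogonal to the symmetric tensor $\T-\S$ when paired against it only if $\vv$ is replaced appropriately. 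Concretely, we rewrite as in \eqref{BAconversion}:
\begin{align*}
\int\vc B(\vv,\T-\S):(\T-\S)=-2a\int \vv\cdot\div(\T-\S)^2 .
\end{align*}
Writing $\vv=(\vv-\vu)+\vu$, the $\vu$ part is bounded by $\|\vu\|_{L^\infty}\|\T-\S\|_{L^2}\|\nabla(\T-\S)\|_{L^2}$. The $(\vv-\vu)$ part is cubic in the differences and is bounded by
\begin{align*}
\|\vv-\vu\|_{L^6}\|\T-\S\|_{L^3}\|\nabla(\T-\S)\|_{L^2}.
\end{align*}
Here we use $q\ge4$ and the $\eps_1\|\nabla(\T-\S)\|_{L^q}^q$ dissipation, together with Gagliardo--Nirenberg interpolation of $\|\T-\S\|_{L^3}$ between $L^2$ and $W^{1,q}$, and Sobolev $\|\vv-\vu\|_{L^6}\lesssim\|\nabla(\vv-\vu)\|_{L^2}$. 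After Young's inequality, this leaves a term of the form $C(t)\,\mathcal E$ with $C\in L^1(0,T)$; the coefficients involve $\|\T\|_{L^2}$, $\|\nabla\T\|_{L^q}^q$ and the strong-solution norms. This integrability is the delicate point, and it is exactly where the exponent restrictions $q\ge4$ (respectively $q>\frac{2p}{p-1}$) from Definition~\ref{def:wkSol} enter. If the direct interpolation fails, the first fallback is to put the weak-solution quantity $\|\nabla\T\|_{L^q}$ into the Gronwall coefficient, which is integrable in time.

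\textbf{Step 3: conclusion.} The remaining terms from $f$ and the pressure cancel, since both solutions share $\vc f$ and the test functions are solenoidal. Hence $\mathcal E(t)\le\int_0^t C(s)\mathcal E(s)\dd s$ with $C\in L^1(0,T)$ and $\mathcal E(0)=0$. Gronwall's lemma gives $\mathcal E\equiv0$, that is, $(\vv,\T)=(\vu,\S)$.
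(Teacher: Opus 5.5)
\textbf{There is a genuine gap in Step 2, in how you treat $\int \vc B(\vv,\T-\S):(\T-\S)$ after integrating by parts.} Your overall strategy is the paper's: the weight $\frac{1}{2(1-\beta)}$ reproduces $E(\vv,\T|\vu,\S)$ up to the factor $2(1-\beta)$, and you close with Gronwall. Your cross term $\vc B(\vv-\vu,\S)$ is also handled correctly. The problem is the $\vu$-piece, which you bound by $\|\vu\|_{L^\infty}\|\T-\S\|_{L^2}\|\nabla(\T-\S)\|_{L^2}$. Nothing in your inequality controls $\nabla(\T-\S)$ quadratically. By \eqref{assGamma}, the stress dissipation is only $c\,\eps_1\|\nabla(\T-\S)\|_{L^q}^q$ with $q\ge 4$. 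Young's inequality therefore leaves $C\|\T-\S\|_{L^2}^{q'}$ with $q'=\frac{q}{q-1}\le \frac43$, that is, a term $\mathcal E^{\alpha}$ with $\alpha\le\frac23<1$. Such a bound does not give uniqueness: $y'\le Cy^{\alpha}$, $y(0)=0$ does not force $y\equiv 0$, as $y=(ct)^{1/(1-\alpha)}$ shows. Moving $\|\nabla(\T-\S)\|_{L^2}$ into the coefficient instead leaves $\mathcal E^{1/2}$, which is no better. So Step 2 as written does not produce a linear Gronwall inequality.

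\textbf{How to fix it.} The integration by parts is what creates the problem. Your remark that $\mathbf W\T-\T\mathbf W$ is orthogonal ``only if $\vv$ is replaced appropriately'' is off. In fact $(\mathbf W\mathbf X-\mathbf X\mathbf W):\mathbf X=0$ pointwise for every velocity and every symmetric $\mathbf X$. Hence, directly,
$\int\vc B(\vu,\T-\S):(\T-\S)=2a\int\Dv{u}:(\T-\S)^2\lesssim\|\nabla\vu\|_{L^\infty}\|\T-\S\|_{L^2}^2$, with $\|\nabla\vu\|_{L^\infty}\in L^2(0,T)$. So split $\vv=(\vv-\vu)+\vu$ \emph{before} integrating by parts, and keep the $\vu$-piece in this form.

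For the cubic piece, use your fallback: it is the robust choice. Write $\nabla(\T-\S)=\nabla\T-\nabla\S$ and apply H\"older:
\begin{itemize}
\item the $\nabla\T$ part gives $\|\vv-\vu\|_{L^6}\|\T-\S\|_{L^2}\|\nabla\T\|_{L^3}\le\delta\|\nabla(\vv-\vu)\|_{L^2}^2+C\|\nabla\T\|_{L^q}^2\|\T-\S\|_{L^2}^2$, with $\|\nabla\T\|_{L^q}^2\in L^{q/2}(0,T)$;
\item the $\nabla\S$ part gives $\|\nabla\S\|_{L^\infty}\|\vv-\vu\|_{L^2}\|\T-\S\|_{L^2}$, which is harmless.
\end{itemize}

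\textbf{Comparison with the paper.} The paper reaches the same point more economically. It splits $\vc B(\vv,\T)-\vc B(\vu,\S)=\vc B(\vv-\vu,\T)+\vc B(\vu,\T-\S)$ and never puts a derivative on $\T-\S$. The first term is bounded by $\|\T\|_{L^\infty}\|\nabla(\vv-\vu)\|_{L^2}\|\T-\S\|_{L^2}$. The paper writes $\|\T\|_{L^4}$, but H\"older needs $L^\infty$, which is available since $W^{1,q}\subset L^\infty$ for $q>3$. The Gronwall coefficient is then $\|\T\|_{W^{1,q}}^2\in L^1(0,T)$, and the stress dissipation is not needed at all. In particular, the restriction $q>\frac{2p}{p-1}$ plays no role.

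\textbf{A secondary issue in Step 1.} Testing the momentum equation with $\vv$ is not admissible in general. For $p\le 2$ you only have $\vv\in L^2W^{1,2}_0$, whereas $\del_t\vv$ pairs with $L^{\sigma'}W^{1,\sigma'}_0$ for some $\sigma'>2$. The energy inequality must instead be taken from the construction, as the paper does in \eqref{TotEnEpsD}.
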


\begin{rem}
    Although our results are stated in the case of the torus $\Omega = \mathbb{T}^3$, they can be easily adapted to bounded smooth domains $\Omega \subset \R^3$, where the Darcy's law \eqref{Darcy} is completed with the boundary condition $\vv \cdot \vc n |_{\partial \Omega} = 0$. In particular, Theorems~\ref{thm:quali} and \ref{thm:wk-str} hold without change, whereas in Theorem~\ref{thm:quant}, the term $\eps^2$ needs to be replaced by $\eps$, which is due to a boundary layer corrector coming from the mismatch $\vv_\eps |_{\partial \Omega} = 0$ versus $\vv \cdot \vc n |_{\partial \Omega} = 0$. We refer to \cite{HoeferLuOschmann2025, HoferNecasovaOschmann2025} and leave the details to the interested reader.
\end{rem}

\section{Uniform estimates}\label{sec:unifEst}
In this section, we will deduce uniform in $\eps$ estimates from the energy inequalities \eqref{estV}, \eqref{estT}, and \eqref{TotalEn} for $(\vv_\eps, \T_\eps)$ listed in Appendix~\ref{sec:non-dim}.
More precisely, we show the following result.
\begin{lemma}\label{lem:unifBds}
    In the situation of Theorem \ref{thm:quali}, there is $\eps_*>0$ such that for all $\eps\in (0,\eps_*)$ there holds
\begin{align}
    &\|\vv_\eps\|_{L^2 L^2}^2 + \eps^\lambda \|\vv_\eps\|_{L^\infty L^2}^2 + \eps^2 \|\Dv{v_\eps}\|_{L^2 L^2}^2 + \eps^2 \We_1^{p-2} \|\Dv{v_\eps}\|_{L^p L^p}^p \mathbf{1}_{p>2} \notag \\
    &\quad + \eps^{\kappa+\xi} \|\T_\eps\|_{L^\infty L^2}^2 + \eps^{\kappa+\xi} \eps_1 \|\nabla \T_\eps\|_{L^q L^q}^q + \eps^\xi \|\T_\eps\|_{L^2 L^2}^2 \lesssim 1. \label{unifBds}
\end{align}
\end{lemma}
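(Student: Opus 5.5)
We take the total energy inequality \eqref{TotalEn} as the starting point. It comes from testing the momentum equation with $\vv_\eps$ and the stress equation with $\eps^\xi\T_\eps/(2(1-\beta_\eps))$, so that the coupling terms $\eps^\xi\div\T_\eps\cdot\vv_\eps$ and $\eps^\xi\Dv{v_\eps}:\T_\eps$ cancel. After the normalisation by $2(1-\beta_\eps)$ this cancellation is exact, and since $\beta_\eps=\eps^{2-\xi}\to0$ (because $\xi<2$) the factor $1-\beta_\eps$ is bounded above and below for small $\eps$. The inequality then bounds
\[
\tfrac{\eps^\lambda}{2}\|\vv_\eps(t)\|_{L^2}^2+\tfrac{\eps^{\kappa+\xi}}{4(1-\beta_\eps)}\|\T_\eps(t)\|_{L^2}^2+\eps^2\!\int_0^t\!\!\int\mu(\We_1\Dv{v_\eps})|\Dv{v_\eps}|^2+\tfrac{\eps^\xi}{2(1-\beta_\eps)}\!\int_0^t\!\|\T_\eps\|_{L^2}^2+c\,\eps^{\kappa+\xi}\eps_1\!\int_0^t\!\|\nabla\T_\eps\|_{L^q}^q
\]
by the initial energy, the force term $\int_0^t\!\int\vc f\cdot\vv_\eps$, and the remaining Oldroyd term $\eps^{\kappa+\xi}\int_0^t\!\int\vc B(\vv_\eps,\T_\eps):\T_\eps$.

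The convective terms drop out: $(\vv_\eps\cdot\nabla)\T_\eps:\T_\eps$ and $\div(\vv_\eps\otimes\vv_\eps)\cdot\vv_\eps$ vanish because $\div\vv_\eps=0$. The dissipation is coercive: \eqref{assMu} with $\D_2=0$ gives $\mu(\We_1\D)\D:\D\ge c|\D|^2+c\We_1^{p-2}|\D|^p1_{p>2}$. The Navier–Stokes part is standard. By Lemma~\ref{lem:Poinc} and Lemma~\ref{lem:Korn}, $\|\vv_\eps\|_{L^2}\lesssim\eps\|\Dv{v_\eps}\|_{L^2}$, so
\[
\Big|\int\vc f\cdot\vv_\eps\Big|\le C\|\vc f\|_{L^2}^2+\tfrac{c}{4}\eps^2\|\Dv{v_\eps}\|_{L^2}^2.
\]
This term is absorbed, and the same Poincaré inequality turns the $\eps^2\|\Dv{v_\eps}\|^2_{L^2L^2}$ bound into the $\|\vv_\eps\|_{L^2L^2}^2$ bound. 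The initial energy is $\lesssim1$ by \eqref{init}, noting that $\eps^{\kappa+\xi}\le\eps^\kappa$ when $\xi\ge0$. For $\xi<0$ one would instead work with $\eps^{-\xi}$ times the stress part; I expect the assumption simply intends the weighted initial stress energy to be bounded.

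The main obstacle is the $\vc B$-term. By \eqref{BAconversion} it equals $2a\,\eps^{\kappa+\xi}\int\vv_\eps\cdot\div(\T_\eps^2)$, and $|\div\T_\eps^2|\lesssim|\T_\eps||\nabla\T_\eps|$. I would estimate it by Hölder with exponents $(2,\tfrac{2q}{q-2},q)$:
\[
\eps^{\kappa+\xi}\|\vv_\eps\|_{L^2}\|\T_\eps\|_{L^{2q/(q-2)}}\|\nabla\T_\eps\|_{L^q}.
\]
Next, Gagliardo–Nirenberg interpolates $\|\T_\eps\|_{L^{2q/(q-2)}}$ between $\|\T_\eps\|_{L^2}$ and $\|\nabla\T_\eps\|_{L^q}$; for $q\ge4$ the exponent $\tfrac{2q}{q-2}\le4$ is subcritical. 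Young's inequality then splits the product into three pieces: a small multiple of $\eps^{\kappa+\xi}\eps_1\|\nabla\T_\eps\|_{L^q}^q$, a small multiple of $\eps^2\|\Dv{v_\eps}\|_{L^2}^2$ (using $\|\vv_\eps\|_{L^2}\lesssim\eps\|\nabla\vv_\eps\|_{L^2}$), and remainder terms of the form $\eps^{\text{power}}\eps_1^{-\text{power}}\|\T_\eps\|_{L^2}^{r}$. The remainders are where \eqref{ass_eps_qn4} and \eqref{ass_eps_q4} enter. Their prefactors should be exactly the listed ratios, so they are either bounded, which lets a Gronwall argument in $\eps^{\kappa+\xi}\|\T_\eps\|_{L^\infty L^2}^2$ close, or small, which in the borderline case $q=4$ lets the terms be absorbed into $\eps^\xi\|\T_\eps\|_{L^2L^2}^2$ and the dissipation for $\eps<\eps_*$. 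If a remainder carries a power $r>2$ of $\|\T_\eps\|_{L^2}$, I would close with a continuity (bootstrap) argument in $t$ instead of linear Gronwall. The bookkeeping of these exponents, and matching them with the conditions in \eqref{ass_eps_qn4} and \eqref{ass_eps_q4}, is the delicate step. The hypothesis $\eps^{\xi(q-1)-\kappa}/\eps_1\to0$ is not needed here and is presumably used later in the homogenization.

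Finally, integrating in time and taking the supremum over $t$ yields every term in \eqref{unifBds}.
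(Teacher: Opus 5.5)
You have a genuine gap in the step that handles the Oldroyd term. The rest of your setup is fine: the total energy inequality, the cancellation of the coupling terms, and absorbing the force term via Lemma~\ref{lem:Poinc} and Lemma~\ref{lem:Korn}. The problem is that you interpolate $\|\T_\eps\|_{L^{2q/(q-2)}}$ against $\|\T_\eps\|_{L^2}$. This makes every remainder \emph{super-linear} in the energy, while the hypotheses only make the prefactors bounded, not small.

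To see this, set $Y(t):=\eps^{\kappa+\xi}\|\T_\eps(t)\|_{L^2}^2$ and $K_\eps:=\eps^{\kappa+\xi}\eps_1^{-2/(q-2)}=(\eps^{(\kappa+\xi)(q-2)}/\eps_1^2)^{1/(q-2)}$. Carrying out your Young steps, the lower-order Gagliardo--Nirenberg piece leaves $K_\eps Y^{q/(q-2)}$. The principal piece leaves $K_\eps^{5(q-2)/(5q-16)}Y^{(5q-12)/(5q-16)}$. Both powers of $Y$ exceed $1$. The relevant hypothesis is $\eps^{(\kappa+\xi)(q-2)}/\eps_1^2\lesssim1$ in \eqref{ass_eps_qn4}, and it gives only $K_\eps\lesssim1$, not $K_\eps\to0$.

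A continuity argument for $\sup_{[0,t]}Y\le C_0+K_\eps\int_0^t(Y^{a}+Y^{b})\dd s$ with $a,b>1$ needs $K_\eps T\max\{C_0^{a-1},C_0^{b-1}\}$ to be small. Otherwise the comparison ODE may blow up before $T$. The damping $\eps^\xi\|\T_\eps\|_{L^2}^2=\eps^{-\kappa}Y$ rescues this only when $\kappa>0$.

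For instance, $q=6$, $\kappa=0$, $\xi=1$, $\eps_1=\eps^2$ (with any admissible $\lambda$, $\We_1$) satisfies \eqref{assCsts} and \eqref{ass_eps_qn4} with $K_\eps=1$. Your route does close for $q=4$, where \eqref{ass_eps_q4} forces $K_\eps=\eps^{\kappa+\xi}/\eps_1\to0$, but it does not prove the lemma as stated.

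The missing idea is to keep $\|\T_\eps\|_{L^2}$ out of the estimate altogether. The paper splits $\T_\eps=(\T_\eps-\T_M^\eps)+\T_M^\eps$, with $\T_M^\eps$ the spatial mean over $\O_\eps$, and treats the two parts separately.
\begin{itemize}
\item For the oscillating part, Poincar\'e--Wirtinger gives $\|\T_\eps-\T_M^\eps\|_{L^{2q/(q-2)}}\lesssim\|\nabla\T_\eps\|_{L^q}$.
\item For the mean, it integrates the stress equation over $\O_\eps$ and multiplies by $\T_M^\eps$. The transport, diffusion and $\Dv{v_\eps}$ terms integrate to zero, leaving only the $\vc B$-term on the right. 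This gives $\sup_t|\T_M^\eps|\lesssim1+\int_0^\tau\|\vv_\eps\|_{L^2}\|\nabla\T_\eps\|_{L^q}\dd t$.
\end{itemize}
The cubic term is then bounded by $\|\vv_\eps\|_{L^2L^2}(\|\nabla\T_\eps\|_{L^qL^q}+\|\nabla\T_\eps\|_{L^qL^q}^2)+\|\vv_\eps\|_{L^2L^2}^2\|\nabla\T_\eps\|_{L^qL^q}^2$. This is where $q\ge4$ is used, in time.

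The last product is handled by returning to the velocity inequality \eqref{estV}, which yields $\eps^2\|\Dv{v_\eps}\|_{L^2L^2}^2\lesssim1+\eps^{2\xi}\|\nabla\T_\eps\|_{L^qL^q}^2$. All remainders are then powers $\le4\le q$ of $\|\nabla\T_\eps\|_{L^qL^q}$ alone. They are absorbed by $\eps^{\kappa+\xi}\eps_1\|\nabla\T_\eps\|_{L^qL^q}^q$, up to additive constants that are exactly the quantities in \eqref{ass_eps_qn4}--\eqref{ass_eps_q4}. Since nothing feeds back into the energy, boundedness of those quantities suffices and no Gronwall argument is needed.
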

\begin{proof}
First, from \eqref{estV}--\eqref{estT}, we find the following energy inequalities: for almost every $\tau \in [0,T]$, we have
\begin{align*}
    &\left[ \int_{\O_\eps} \eps^\lambda \frac{1}{2} |\vv_\eps|^2 \dd x \right]_{t=0}^{t=\tau} + \int_0^\tau \int_{\O_\eps} \eps^2 \mu(\We_1 \Dv{v_\eps}) |\Dv{v_\eps}|^2 \dd x \dd t \\
    &\qquad \leq \int_0^\tau \int_{\O_\eps} \vv_\eps \cdot \vc f -  \eps^\xi \Dv{v_\eps} : \T_\eps \dd x \dd t, \\
    &\left[ \int_{\O_\eps} \eps^\kappa \frac{1}{2} |\T_\eps|^2 \dd x \right]_{t=0}^{t=\tau} + \int_0^\tau \int_{\O_\eps} \eps^\kappa \eps_1 \gamma( \nabla \T_\eps)|\nabla \T_\eps|^2 + |\T_\eps|^2 \dd x \dd t \\
    &\qquad = \int_0^\tau \int_{\O_\eps} 2 (1-\beta_\eps) \Dv{v_\eps} : \T_\eps - 2a \eps^\kappa \vv_\eps \cdot \div \T_\eps^2 \dd x \dd t .
\end{align*}
We also have from \eqref{TotalEn} that
\begin{align}\label{TotEnEps}
\begin{split}
    &\left[ \int_{\O_\eps} \eps^\lambda (1-\beta_\eps) |\vv_\eps|^2 + \frac{\eps^{\kappa+\xi}}{2} |\T_\eps|^2 \dd x \right]_{t=0}^{t=\tau} + \int_0^\tau \int_{\O_\eps} 2(1-\beta_\eps) \eps^2 \mu(\We_1 \Dv{v_\eps}) |\Dv{v_\eps}|^2 \dd x \dd t  \\
    &\qquad + \int_0^\tau \int_{\O_\eps} \eps^{\kappa+\xi} \eps_1 \gamma( \nabla \T_\eps) |\nabla \T_\eps|^2 + \eps^\xi |\T_\eps| ^2 \dd x \dd t \\
    &\leq 2\int_0^\tau \int_{\O_\eps} (1-\beta_\eps) \vv_\eps \cdot \vc f - a \eps^{\kappa+\xi} \vv_\eps \cdot \div \T_\eps^2 \dd x \dd t . 
\end{split}    
\end{align}

From the total energy inequality \eqref{TotEnEps}, we can extract uniform bounds for the functions $(\vv_\eps, \T_\eps)$ as follows. First, for any $\delta > 0$,
\begin{align*}
    \int_0^\tau \int_{\O_\eps} (1-\beta_\eps) \vv_\eps \cdot \vc f \dd x \dd t \lesssim \|\vv_\eps\|_{L^2 L^2} \lesssim \eps \|\Dv{v_\eps}\|_{L^2 L^2} \lesssim \delta \eps^2 \|\Dv{v_\eps}\|_{L^2 L^2}^2 + 1,
\end{align*}
such that, using coercivity properties of $\mu$ and $\gamma$ from \eqref{assGamma} and \eqref{assMu}, we see
\begin{align*}
    &\left[ \int_{\O_\eps} \eps^\lambda |\vv_\eps|^2 + \eps^{\kappa+\xi} |\T_\eps|^2 \dd x \right]_{t=0}^{t=\tau} + \int_0^\tau \int_{\O_\eps} \eps^2 |\Dv{v_\eps}|^2 + \eps^2 \We_1^{p-2} |\Dv{v_\eps}|^p \mathbf{1}_{p>2} \dd x \dd t \\
    &\qquad + \int_0^\tau \int_{\O_\eps} \eps^{\kappa+\xi} \eps_1 |\nabla \T_\eps|^q + \eps^\xi |\T_\eps|^2 \dd x \dd t \\
    &\lesssim 1 + \int_0^\tau \int_{\O_\eps} \eps^{\kappa+\xi} |\vv_\eps| | \T_\eps| |\nabla \T_\eps| \dd x \dd t.
\end{align*}
Now, we need the same trick as in \cite{KremlPokornySalom2014}: set $\T_M^\eps = |\Omega_\eps|^{-1} \int_{\O_\eps} \T_\eps \dd x$, integrate the equation \eqref{NSE}$_3$ for $\T_\eps$ over $\O_\eps$, and multiply with $\T_M^\eps$ to find
\begin{align*}
    \eps^\kappa \left[ \frac{|\Omega_\eps|}{2} |\T_M^\eps|^2 \right]_{t=0}^{t=\tau} + \int_0^\tau \int_{\O_\eps} |\T_M^\eps|^2 \dd x \dd t &= - \eps^\kappa \int_0^\tau \int_{\O_\eps} \vv_\eps \cdot \div (2a \T_\eps \T_M^\eps) \dd x \dd t \\
    &\lesssim \eps^\kappa \int_0^\tau |\T_M^\eps| \int_{\O_\eps} |\vv_\eps| |\nabla \T_\eps| \dd x \dd t.
\end{align*}
Hence, taking the supremum over $t \in (0,\tau)$, solving the resulting quadratic equation, and using the assumptions on the initial data \eqref{init}, we have
\begin{align*}
    \|\T_M^\eps\|_{L^\infty(0,\tau)} \lesssim 1 + \int_0^\tau \|\vv_\eps\|_{L^2} \|\nabla \T_\eps\|_{L^q} \dd t.
\end{align*}
In turn, using Lemma~\ref{lem:Poinc} and $q \geq 4$, we see that
\begin{align*}
    &\int_0^\tau \int_{\O_\eps} |\vv_\eps| |\T_\eps| |\nabla \T_\eps| \dd x \dd t \\
    &\lesssim \int_0^\tau \int_{\O_\eps} |\vv_\eps| |\T_\eps - \T_M^\eps| |\nabla \T_\eps| \dd x \dd t + \int_0^\tau |\T_M^\eps| \int_{\O_\eps} |\vv_\eps| |\nabla \T_\eps| \dd x \dd t \\
    &\lesssim \|\vv_\eps\|_{L^2 L^2} \|\nabla \T_\eps\|_{L^q L^q}^2 + \|\vv_\eps\|_{L^2 L^2} \|\nabla \T_\eps\|_{L^q L^q} + \|\vv_\eps\|_{L^2 L^2}^2 \|\nabla \T_\eps\|_{L^q L^q}^2 \\
    &\lesssim \eps \|\Dv{v_\eps}\|_{L^2 L^2} \|\nabla \T_\eps\|_{L^q L^q} + \eps \|\Dv{v_\eps}\|_{L^2 L^2} \|\nabla \T_\eps\|_{L^q L^q}^2 + \eps^2 \|\Dv{v_\eps}\|_{L^2 L^2}^2 \|\nabla \T_\eps\|_{L^q L^q}^2.
\end{align*}

Young's inequality now forces
\begin{align*}
    &\eps^{\kappa+\xi+1} \|\Dv{v_\eps}\|_{L^2 L^2} \|\nabla \T_\eps\|_{L^q L^q} \lesssim \delta \eps^2 \|\Dv{v_\eps}\|_{L^2 L^2}^2 + \eps^{2(\kappa+\xi)} \|\nabla \T_\eps\|_{L^q L^q}^2 \\
    &\lesssim \delta \eps^2 \|\Dv{v_\eps}\|_{L^2 L^2}^2 + (\eps^{2(\kappa+\xi)})^\frac{q-1}{q-2} \eps_1^{-\frac{2}{q-2}} + \delta \eps^{\kappa+\xi} \eps_1 \|\nabla \T_\eps\|_{L^q L^q}^q.
\end{align*}

Next, similarly,
\begin{align*}
    &\eps^{\kappa+\xi+1} \|\Dv{v_\eps}\|_{L^2 L^2} \|\nabla \T_\eps\|_{L^q L^q}^2 \lesssim \delta \eps^2 \|\Dv{v_\eps}\|_{L^2 L^2}^2 + \eps^{2(\kappa+\xi)} \|\nabla \T_\eps\|_{L^q L^q}^4 \\
    &\lesssim \delta \eps^2 \|\Dv{v_\eps}\|_{L^2 L^2}^2 + \chi_{q=4} (\eps^{2(\kappa+\xi)})^\frac{q-2}{q-4} \eps_1^{-\frac{4}{q-4}} + \delta \eps^{\kappa+\xi} \eps_1 \|\nabla \T_\eps\|_{L^q L^q}^q,
\end{align*}
where $\chi_{q=4} = 0$ if $q=4$ and $\chi_{q=4} = 1$ otherwise.
Indeed, in the case $q=4$ this follows since $\eps^{\kappa+\xi} / \eps_1 \ll 1$ for sufficiently small $\eps_*$ in light of assumption \eqref{ass_eps_q4}.

Thus,
\begin{align*}
    &\left[ \int_{\O_\eps} \eps^\lambda |\vv_\eps|^2 + \eps^{\kappa+\xi} |\T_\eps|^2 \dd x \right]_{t=0}^{t=\tau} + \int_0^\tau \int_{\O_\eps} \eps^2 |\Dv{v_\eps}|^2 + \eps^2 \We_1^{p-2} |\Dv{v_\eps}|^p \mathbf{1}_{p>2} \dd x \dd t \\
    &\qquad + \int_0^\tau \int_{\O_\eps} \eps^{\kappa+\xi} \eps_1 |\nabla \T_\eps|^q + \eps^\xi |\T_\eps|^2 \dd x \dd t \\
    &\lesssim 1 + (\eps^{2(\kappa+\xi)})^\frac{q-1}{q-2} \eps_1^{-\frac{2}{q-2}} \\
    &\qquad + \chi_{q=4} (\eps^{2(\kappa+\xi)})^\frac{q-2}{q-4} \eps_1^{-\frac{4}{q-4}} + \eps^{\kappa+\xi+2} \|\Dv{v_\eps}\|_{L^2 L^2}^2 \|\nabla \T_\eps\|_{L^q L^q}^2.
\end{align*}

To handle the last term, we come back to \eqref{estV}.
Using partial integration, we have
\begin{align*}
    \int_0^\tau \int_{\O_\eps} \vv_\eps \cdot \vc f - \eps^\xi \Dv{v_\eps} : \T_\eps \dd x \dd t &\lesssim \eps \|\Dv{v_\eps}\|_{L^2 L^2} (1 + \eps^\xi \|\nabla \T_\eps\|_{L^q L^q}) \\
    &\lesssim \delta \eps^2 \|\Dv{v_\eps}\|_{L^2 L^2}^2 + 1 + \eps^{2\xi} \|\nabla \T_\eps\|_{L^q L^q}^2.
\end{align*}
Using again \eqref{init}, this yields
\begin{align*}
    \eps^\lambda \|\vv_\eps\|_{L^\infty L^2}^2 + \eps^2 \|\Dv{v_\eps}\|_{L^2 L^2}^2 + \eps^2 \We_1^{p-2} \|\Dv{v_\eps}\|_{L^p L^p}^p \mathbf{1}_{p>2} \lesssim 1 
    + \eps^{2\xi} \|\nabla \T_\eps\|_{L^q L^q}^2,
\end{align*}
and hence
\begin{align*}
    &\eps^{\kappa+\xi+2} \|\Dv{v_\eps}\|_{L^2 L^2}^2 \|\nabla \T_\eps\|_{L^q L^q}^2 \lesssim \eps^{\kappa+\xi} ( 1 + \eps^{2\xi} \|\nabla \T_\eps\|_{L^q L^q}^2 ) \|\nabla \T_\eps\|_{L^q L^q}^2 \\
    &\lesssim \eps^{\kappa+\xi}
    \eps_1^{-\frac{2}{q-2}} + \delta \eps^{\kappa+\xi} \eps_1 \|\nabla \T_\eps\|_{L^q L^q}^q + \chi_{q=4} \eps^{\kappa+\xi} (\eps^{2\xi})^\frac{q}{q-4} \eps_1^{-\frac{4}{q-4}}.
\end{align*}
In total, we arrive at
\begin{align*}
    &\left[ \int_{\O_\eps} \eps^\lambda |\vv_\eps|^2 + \eps^{\kappa+\xi} |\T_\eps|^2 \dd x \right]_{t=0}^{t=\tau} + \int_0^\tau \int_{\O_\eps} \eps^2 |\Dv{v_\eps}|^2 + \eps^2 \We_1^{p-2} |\Dv{v_\eps}|^p \mathbf{1}_{p>2} \dd x \dd t \\
    &\qquad + \int_0^\tau \int_{\O_\eps} \eps^{\kappa+\xi} \eps_1 |\nabla \T_\eps|^q + \eps^\xi |\T_\eps|^2 \dd x \dd t \\
    &\lesssim 1 + (\eps^{2(\kappa+\xi)})^\frac{q-1}{q-2} \eps_1^{-\frac{2}{q-2}} + \chi_{q=4} (\eps^{2(\kappa+\xi)})^\frac{q-2}{q-4} \eps_1^{-\frac{4}{q-4}} 
    + \eps^{\kappa+\xi} 
    \eps_1^{-\frac{2}{q-2}} + \chi_{q=4} \eps^{\kappa+\xi} (\eps^{2\xi})^\frac{q}{q-4} \eps_1^{-\frac{4}{q-4}}
    \lesssim 1,
\end{align*}
where we have used the assumptions \eqref{ass_eps_qn4} for the case $q\ne 4$ and \eqref{ass_eps_q4} for the case $q=4$ in the last step.
Finally, utilizing Poincar\'e's and Korn's inequalities from Lemma~\ref{lem:Poinc} and \ref{lem:Korn}, we obtain \eqref{unifBds}.
\end{proof}

\section{Qualitative convergence}\label{sec:hom}
In this section, we will give the proof for Theorem \ref{thm:quali}.
From the uniform estimate \eqref{unifBds} obtained in Lemma \ref{lem:unifBds}, we infer that there exists $\vv\in L^2((0,T) \times \O;\R^3)$ such that
\begin{align*}
    \tilde{\vv}_\eps \weak \vv \text{ weakly in } L^2((0,T) \times \O;\R^3), \qquad \div\vv=0.
\end{align*}
We will show that $\vv$ is the solution to Darcy's law \eqref{Darcy}. We also recall the preliminary results from Section~\ref{sec:prelim} which will be used here several times.\\

To this end, for solenoidal $\phi \in C_c^\infty([0,T] \times \O;\R^3)$ and $B_\eps$ being the operator from Lemma~\ref{lem:Bog-type}, we can take $\vc w_\eps = W_\eps \phi - B_\eps(\phi)$ as a proper test function in the momentum equation \eqref{NSE}$_2$ to infer
\begin{align*}
    &\left[ \int_{\O_\eps} \eps^\lambda \vv_\eps \cdot \del_t \vc w_\eps \dd x \right]_{t=0}^{t=\tau} + \eps^\lambda \int_0^\tau \int_{\O_\eps} \vv_\eps \otimes \vv_\eps : \nabla \vc w_\eps \dd x \dd t \\
    &\qquad - \int_0^\tau \int_{\O_\eps} \eps^2 \mu(\We_1 \Dv{v_\eps}) \Dv{v_\eps} : \Dv{w_\eps} - \vc f \cdot \vc w_\eps - \eps^\xi \vc w_\eps \cdot \div \T_\eps \dd x \dd t = 0. 
\end{align*}

Note that thanks to the properties of $W_\eps$ and $B_\eps$, we have for any $p \in (1,\infty)$
\begin{align*}
    \|\vc w_\eps\|_{W^{1,\infty} L^p} + \eps \|\nabla \vc w_\eps\|_{W^{1,\infty} L^p} \leq C \|\phi\|_{W^{1,\infty} W^{1,p}}.
\end{align*}
We will now estimate all terms separately. First,
\begin{align*}
    &\left[ \int_{\O_\eps} \eps^\lambda \vv_\eps \cdot \del_t \vc w_\eps \dd x \right]_{t=0}^{t=\tau} \lesssim \eps^\lambda \|\vv_\eps\|_{L^\infty L^2} \|\vc w_\eps\|_{W^{1,\infty} L^2} \lesssim \eps^\frac{\lambda}{2},\\
    &\eps^\lambda \int_0^\tau \int_{\O_\eps} \vv_\eps \otimes \vv_\eps : \nabla \vc w_\eps \dd x \dd t \lesssim \eps^\lambda \|\vv_\eps\|_{L^2 L^\frac{6}{3-2\theta}}^2 \|\nabla \vc w_\eps\|_{L^\infty L^\frac{3}{2\theta}} \lesssim \eps^{\lambda - 1 - 2\theta} ,
\end{align*}
where we used the interpolation inequality
\begin{align*}
    \|\vv_\eps \otimes \vv_\eps\|_{L^\frac{3}{3-2\theta}} \leq \|\vv_\eps\|_{L^\frac{6}{3-2\theta}}^2 \leq \|\vv_\eps\|_{L^2}^{2(1-\theta)} \|\vv_\eps\|_{L^6}^{2\theta} \lesssim \eps^{-2\theta}, \ \theta \in (0,1).
\end{align*}
Note that $\lambda>1$ by \eqref{assCsts}, so that we may choose $\theta\in (0,1)$ with $\lambda-1-2\theta >0$.
Summarizing, we obtain
\begin{align*}
    &\left[ \int_{\O_\eps} \eps^\lambda \vv_\eps \cdot \del_t \vc w_\eps \dd x \right]_{t=0}^{t=\tau} + \eps^\lambda \int_0^\tau \int_{\O_\eps} \vv_\eps \otimes \vv_\eps : \nabla \vc w_\eps \dd x \dd t \to 0.
\end{align*}

Moreover, since $W_\eps \weak \mathbb{I}$ in $L^p$ for any finite $p$, which follows from $W_\eps \to \mathbb{I}$ strongly in $[W^{1,1}(\Omega)]'$ and the uniform bound of $W_\eps$ in $L^\infty$, and $\|B_\eps(\phi)\|_{L^p} \lesssim \eps \|\nabla \phi\|_{L^p}$, we infer $\vc w_\eps \weak \phi$ weakly in $L^p((0,T) \times \O)$ for any $p \in [1,\infty)$, which together with $\vc w_\eps = 0$ in $\O \setminus \O_\eps$ yields
\begin{align*}
    \int_0^\tau \int_{\O_\eps} \vc f \cdot \vc w_\eps \dd x \dd t \to \int_0^\tau \int_\O \vc f \cdot \phi \dd x \dd t.
\end{align*}
Furthermore, we may use assumption \eqref{assCsts} to obtain 
\begin{align*}
    \int_0^\tau \int_{\O_\eps} \eps^\xi \vc w_\eps \cdot \div \T_\eps \dd x \dd t \lesssim \eps^\xi \|\vc w_\eps\|_{L^\infty  L^2} \|\nabla \T_\eps\|_{L^q L^q} \lesssim \eps^\xi [\eps^{\kappa+\xi} \eps_1]^{-\frac1q} \to 0.
\end{align*}
Lastly, we focus on the dissipation term, which eventually will lead to the resistance term in Darcy's law \eqref{Darcy}.
Recall from \eqref{assMu} that we assumed
\begin{align*}
    \mu(\D) = \mu_0 + g(|\D|), && |g(s)| \lesssim s \mathbf{1}_{s \leq s_0} + s^{\max\{p-2, 0\}} \mathbf{1}_{s \geq s_0}, \ s_0 > 0.
\end{align*}
In turn, we can follow the presentation in \cite{HoeferLuOschmann2025}.
Indeed, we may rewrite
\begin{align}\label{dissip_decom_eq1}
\begin{split}
    &\int_0^\tau \int_{\O_\eps} \eps^2 \mu(\We_1 \Dv{v_\eps}) \Dv{v_\eps} : \Dv{w_\eps} \dd x \dd t \\
    &= \int_0^\tau \int_{\O_\eps} \eps^2 \mu_0 \Dv{v_\eps} : \Dv{w_\eps} \dd x \dd t + \int_0^\tau \int_{\O_\eps} \eps^2 g(\We_1 |\Dv{v_\eps}|) \Dv{v_\eps} : \Dv{w_\eps} \dd x \dd t \\
    &= \int_0^\tau \int_{\O_\eps} \eps^2 \mu_0 \Dv{v_\eps} : \D(W_\eps \phi) \dd x \dd t - \int_0^\tau \int_{\O_\eps} \eps^2 \mu_0 \Dv{v_\eps} : \D B_\eps(\phi) \dd x \dd t \\
    &\qquad + \int_0^\tau \int_{\O_\eps} \eps^2 g(\We_1 |\Dv{v_\eps}|) \Dv{v_\eps} : \Dv{w_\eps} \dd x \dd t .
\end{split}
\end{align}
Similarly to \cite{HoeferLuOschmann2025}, we may use $\div\vv_\eps=0$ to obtain
\begin{align*}
    &\int_0^\tau \int_{\O_\eps} \eps^2 \mu_0 \Dv{v_\eps} : \D(W_\eps \phi) \dd x \dd t \\
    &= \int_0^\tau \mu_0 \langle \eps^2 (-\Delta W_\eps + \nabla Q_\eps), \tilde{\vv}_\eps \otimes \phi \rangle \dd t + \eps^2 \int_0^\tau \int_{\O_\eps} \vc z_\eps \cdot \vv_\eps \dd x \dd t \\
    &= \int_0^\tau \int_{\O} \mu_0 M \tilde{\vv}_\eps \cdot \phi \dd x \dd t + \eps^2 \int_0^\tau \int_{\O_\eps} \vc z_\eps \cdot \vv_\eps \dd x \dd t,
\end{align*}
where $\vc z_\eps = (\Delta W_\eps) \phi - \Delta(W_\eps \phi) - (Q_\eps \cdot \nabla) \phi$.
In particular $\|\vc z_\eps\|_{L^2 L^2} \lesssim \eps^{-1}$, and thus
\begin{align*}
    \eps^2 \int_0^\tau \int_{\O_\eps} \vc z_\eps \cdot \vc v_\eps \dd x \dd t \lesssim \eps^2 \|\vc z_\eps\|_{L^2 L^2} \|\vv_\eps\|_{L^2 L^2} \lesssim \eps \to 0.
\end{align*}
Since $\tilde{\vv}_\eps \weak \vv$ weakly in $L^2((0,T) \times \O)$, we infer
\begin{align*}
    &\int_0^\tau \int_{\O_\eps} \eps^2 \mu_0 \Dv{v_\eps} : \D(W_\eps \phi) \dd x \dd t \to \int_0^\tau \int_\O \mu_0 M \vv \cdot \phi \dd x \dd t.
\end{align*}

The second contribution to the right-hand side of \eqref{dissip_decom_eq1} vanishes due to
\begin{align*}
    \int_0^\tau \int_{\O_\eps} \eps^2 \mu_0 \Dv{v_\eps} : \D B_\eps(\phi) \dd x \dd t \lesssim \eps^2 \|\Dv{v_\eps}\|_{L^2 L^2} \|\nabla B_\eps(\phi)\|_{L^2 L^2} \lesssim \eps \to 0.
\end{align*}

Finally, we show that also the third contribution to the right-hand side of \eqref{dissip_decom_eq1} vanishes, where we pay special attention to $\We_1$.
To this end, we notice that
\begin{align*}
    |g(\We_1 |\Dv{v_\eps}|)| \lesssim |\We_1 \Dv{v_\eps}|^\varkappa + |\We_1 \Dv{v_\eps}|^{p-2} \mathbf{1}_{p>2} && \forall \varkappa \in (0,1).
\end{align*}
Hence,
\begin{align*}
& \eps^2 \Big|  \int_{\O} g(|\We_1 \Dv{\vv_\eps}|) \Dv{\vv_\eps} : \Dv{w_\eps} \dd x \dd t \Big|\\
  &\lesssim \eps^2 \int_{\O} ( \We_1^\varkappa |\Dv{v_\eps} |^{1+\varkappa} + \We_1^{p-2} |\Dv{v_\eps} |^{p-1} \mathbf{1}_{p>2} ) |\Dv{w_\eps} | \dd x \dd t  \\
  &\lesssim \eps^2 ( \We_1^\varkappa \| | \nabla  \vv _{\eps} |^{1+\varkappa} \|_{L^{\frac{2}{1+\varkappa}}(\O)}  \| \vc w_{\eps} \|_{W^{1, \frac{2}{1-\varkappa}}(\O)} + \We_1^{p-2} \| | \nabla  \tilde \vv _{\eps} |^{p-1} \|_{L^{\frac{p}{p-1}}(\O)}  \| \vc w_\eps \|_{W^{1, p}(\O)} \mathbf{1}_{p>2} )  \\
  & \lesssim \eps^2 ( \We_1^\varkappa \eps^{-(1+\varkappa)} \eps^{-1} + \We_1^{p-2} (\eps^{-2} \We_1^{2-p})^\frac{p-1}{p} \eps^{-1} \mathbf{1}_{p>2} ) \\
  &=  (\We_1 / \eps)^{\varkappa} + (\We_1 / \eps)^\frac{p-2}{p} \mathbf{1}_{p>2} \to 0,
\end{align*}
where we used assumption \eqref{assCsts} in form of $\We_1 / \eps \to 0$ in the last step.
Collecting all terms, we arrive at $\langle \mu_0 M\vv - \vc f,\phi\rangle=0$.
Since the solenoidal $\phi\in C_c^\infty([0,T]\times\O;\R^3)$ was arbitrary, there is for all $t\in [0,T]$ a pressure $\pi(t)\in L^2(\O;\R)$ with zero mean such that $\mu_0 M\vv(t)=\vc f(t) -\nabla \pi(t)$ by de Rham's argument (see \cite{Temam1977}, Chapter~I, Proposition~1.1).
Since $\vv$ and $\vc f$ belong to $L^2((0,T)\times\O;\R^3)$, so does $\nabla\pi$ and hence by Poincar\'e's inequality also $\pi$.

\section{Quantitative convergence}\label{sec:REI}
Before presenting the proof of Theorem \ref{thm:quant}, we derive a relative energy inequality for the scaled viscoelastic system.
Relative energy methods are well established in the study of fluid equations and have been used successfully to obtain quantitative convergence and weak--strong uniqueness results; see, for instance, Theorem 4 in Section 3.4 of \cite{FeireislNovotny2022} for the case without polymeric stress.
The presence of the additional stress variable requires a modified relative energy functional and a corresponding extension of the underlying argument.
The inequality established below forms the key ingredient in the proofs of Theorem \ref{thm:quant} and \ref{thm:wk-str}.

\begin{lemma}[Relative energy inequality]\label{lem:REI}
    Let $D\subseteq \Omega$ be a domain with Lipschitz boundary.
    Let $\eps>0$, and assume that $(\vv_\eps,\T_\eps)$ is a weak solution to \eqref{NSE} on $D$.
    Consider arbitrary
    \begin{align}\label{regStrSol}
    \begin{cases}
        (\vu, \S) \in W^{1,\infty}(0,T; L^2(D; \R^3 \times \R^{3 \times 3})) \cap L^2(0,T; W^{2,\infty}(D; \R^3 \times \R^{3 \times 3})), \\
        \div \vu = 0, \qquad \vu |_{\del D} = 0, \qquad \del_n \S |_{\del D} = 0.
    \end{cases}
    \end{align}
    Then for the relative energy
    \begin{align*}
        E(\vv_\eps, \T_\eps | \vu, \S) = \int_{D} \eps^\lambda (1-\beta_\eps) |\vv_\eps - \vu|^2 + \frac12\eps^{\kappa+\xi} |\T_\eps - \S|^2 \dd x,
    \end{align*}
    it holds
    \begin{align}\label{REI}
    \begin{split}
    &\partial_t E(\vv_\eps, \T_\eps | \vu, \S) + \int_{D} \eps^\xi |\T_\eps - \S|^2 \dd x \\
    &\quad + \int_{D} \eps^{\kappa+\xi} \eps_1 \nabla (\T_\eps - \S) : ( \gamma(\nabla \T_\eps) \nabla \T_\eps - \gamma(\nabla \S) \nabla \S ) \dd x \\
    &\quad + \int_{D} 2 \eps^2 (1-\beta_\eps) (\Dv{\vv_\eps} - \D\vu) : ( \mu(\We_1 \Dv{\vv_\eps}) \Dv{\vv_\eps} - \mu(\We_1 \Dv{u}) \Dv{u} ) \dd x \\
    &\le 2\int_{D} (1-\beta_\eps) (\vv_\eps - \vu) \cdot (\vc f + \eps^\xi \div \S - \eps^\lambda \del_t \vu - N_\eps(\vu)) \dd x \\
    &\quad + 2\int_{D} (1-\beta_\eps) \eps^\lambda (\vv_\eps - \vu) \cdot ((\vu - \vv_\eps) \cdot \nabla) \vu \dd x \\
    &\quad - \int_{D} \eps^\xi (\T_\eps - \S) : ( \S + \eps^\kappa \oldr{\vu}{\S} - 2(1-\beta_\eps) \Dv{u} ) \dd x \\
    &\quad + \int_{D} \eps^{\kappa+\xi} (\T_\eps - \S) : [ ((\vu - \vv_\eps) \cdot \nabla) \S + \vc B(\vv_\eps - \vu, \T_\eps) + \vc B(\vu, \T_\eps - \S) ] \dd x, 
    \end{split}
    \end{align}
    where
    \begin{align*}
        N_\eps(\vu)&:=\eps^\lambda \div(\vu \otimes \vu) - \eps^2 \div(\mu(\We_1 \Dv{\vu})\Dv{\vu}), \\
        \oldr{\vu}{\S} &:= \del_t \S + (\vu \cdot \nabla) \S  - \eps_1\div(\gamma(\nabla\S)\nabla\S) - \vc B(\vu, \S).
\end{align*}
\end{lemma}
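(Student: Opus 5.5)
The relative energy inequality follows the standard route: expand the square and split it into three pieces, namely the total energy inequality for $(\vv_\eps,\T_\eps)$ from \eqref{TotEnEps}, the weak formulations tested with the smooth pair $(\vu,\S)$, and an explicit computation of the energy of $(\vu,\S)$. We write
\begin{align*}
E(\vv_\eps,\T_\eps|\vu,\S) &= \int_D \eps^\lambda(1-\beta_\eps)|\vv_\eps|^2+\tfrac12\eps^{\kappa+\xi}|\T_\eps|^2 \dd x - 2\int_D \eps^\lambda(1-\beta_\eps)\vv_\eps\cdot\vu + \tfrac12\eps^{\kappa+\xi}\T_\eps:\S \dd x\\
&\quad + \int_D \eps^\lambda(1-\beta_\eps)|\vu|^2+\tfrac12\eps^{\kappa+\xi}|\S|^2\dd x .
\end{align*}
Since \eqref{TotEnEps} is an integrated inequality, we understand $\partial_t E$ and \eqref{REI} in the integrated sense on $[0,\tau]$. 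The first piece is controlled by \eqref{TotEnEps}. The $\eps^\xi|\T_\eps|^2$, $\gamma$- and $\mu$-dissipation terms appear there with the correct sign, and on the right we get $2(1-\beta_\eps)\vv_\eps\cdot\vc f - 2a\eps^{\kappa+\xi}\vv_\eps\cdot\div\T_\eps^2$. The last piece is differentiated directly, using $\vu\in W^{1,\infty}L^2$ and $\S\in W^{1,\infty}L^2$.

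For the cross terms, we test the momentum equation with $2(1-\beta_\eps)\vu$, which is admissible after density since $\vu$ is solenoidal and vanishes on $\partial D$. We also test the stress equation with $\eps^\xi\S$, which is admissible since $\S$ is symmetric and smooth enough. The time derivatives of $\int\vv_\eps\cdot\vu$ and $\int\T_\eps:\S$ are then obtained by a product rule in the duality $\langle\partial_t\vv_\eps,\vu\rangle+\int\vv_\eps\cdot\partial_t\vu$. This is justified by the regularity $\partial_t\vv_\eps\in L^\sigma[W_0^{1,\sigma'}]'$ and $\partial_t\T_\eps\in L^{q'}[W^{1,q}]'$ from Definition~\ref{def:wkSol}, together with a mollification in time.

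Next we add everything and regroup. We add and subtract the strong-solution operators $N_\eps(\vu)$, $\eps^\lambda\partial_t\vu$, $\eps^\xi\div\S$ and $\oldr{\vu}{\S}$, tested against $\vv_\eps-\vu$ and $\T_\eps-\S$ respectively. The monotone differences $(\D\vv_\eps-\D\vu):(\mu\D\vv_\eps-\mu\D\vu)$ and $\nabla(\T_\eps-\S):(\gamma\nabla\T_\eps-\gamma\nabla\S)$ then appear on the left. The coupling terms need care. The weak momentum equation contributes $\eps^\xi\int\div\T_\eps\cdot\vu$, and the stress equation contributes $2(1-\beta_\eps)\eps^\xi\int\D\vv_\eps:\S$. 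Combined with the $\vv_\eps$-$\T_\eps$ coupling cancelled inside \eqref{TotEnEps}, and using symmetry of $\T_\eps,\S$ and integration by parts, these reproduce exactly $-\eps^\xi(\T_\eps-\S):(\S-2(1-\beta_\eps)\D\vu)$ together with $(\vv_\eps-\vu)\cdot\eps^\xi\div\S$. The convective terms are handled as in the Navier--Stokes relative energy argument. We use $\int(\vv_\eps\cdot\nabla)\vv_\eps\cdot\vu = -\int(\vv_\eps\cdot\nabla)\vu\cdot\vv_\eps$ and the identity $\int((\vv_\eps-\vu)\cdot\nabla)\vu\cdot\vu=0$, which together give the term $\eps^\lambda(\vv_\eps-\vu)\cdot((\vu-\vv_\eps)\cdot\nabla)\vu$. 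The stress transport terms are treated the same way and give $((\vu-\vv_\eps)\cdot\nabla)\S$.

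The main obstacle is the bookkeeping of the $\vc B$ terms. We must show that the quantity
\[
-2a\eps^{\kappa+\xi}\int\vv_\eps\cdot\div\T_\eps^2 - \eps^{\kappa+\xi}\int\vc B(\vv_\eps,\T_\eps):\S - \eps^{\kappa+\xi}\int\vc B(\vu,\S):\T_\eps + \eps^{\kappa+\xi}\int\vc B(\vu,\S):(\T_\eps-\S)
\]
equals $\eps^{\kappa+\xi}\int(\T_\eps-\S):[\vc B(\vv_\eps-\vu,\T_\eps)+\vc B(\vu,\T_\eps-\S)]$. Here the last integral is the term added back when we insert $\oldr{\vu}{\S}$. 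We do this using \eqref{BAconversion} to write the first term as $\int\vc B(\vv_\eps,\T_\eps):\T_\eps$, bilinearity of $\vc B$ in each argument, and the vanishing of $\int\vc B(\vu,\S):\S$-type leftovers only where they actually cancel. We check this identity term by term with the expansion $\vc B(\vv_\eps,\T_\eps)=\vc B(\vv_\eps-\vu,\T_\eps)+\vc B(\vu,\T_\eps-\S)+\vc B(\vu,\S)$. The remaining technical point is that $\vc B(\vv_\eps,\T_\eps):\T_\eps$ is only defined via the $\vc A$-formulation, so every identity is first proved for smooth approximations and then passed to the limit using $\T_\eps\in L^qW^{1,q}$, $q\ge4$, and $\vv_\eps\in L^2W^{1,2}\cap L^\infty L^2$.
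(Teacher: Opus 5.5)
Your strategy is the paper's: expand the square, control $E(\vv_\eps,\T_\eps|0,0)$ by the total energy inequality, test the weak formulations with $2(1-\beta_\eps)\vu$ and $\eps^\xi\S$, and regroup. The coupling, convective, transport and viscous/diffusive regroupings you describe do check out.

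The problem is the step you flag as the main obstacle: the $\vc B$-identity you propose to verify is false as written. In your four-term quantity, the first two terms sum to $\eps^{\kappa+\xi}\int_D \vc B(\vv_\eps,\T_\eps):(\T_\eps-\S)\dd x$, and the last two sum to $-\eps^{\kappa+\xi}\int_D \vc B(\vu,\S):\S\dd x$. By bilinearity, your right-hand side equals $\eps^{\kappa+\xi}\int_D (\T_\eps-\S):[\vc B(\vv_\eps,\T_\eps)-\vc B(\vu,\S)]\dd x$. The two sides therefore differ by $\eps^{\kappa+\xi}\int_D \vc B(\vu,\S):(\T_\eps-2\S)\dd x$, which does not vanish in general. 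Indeed, $\int_D \vc B(\vu,\S):\T_\eps\dd x$ is a genuine cross term, and $\int_D \vc B(\vu,\S):\S\dd x = 2a\int_D \D\vu:\S^2\dd x$ is nonzero for $a\neq 0$. So a term-by-term check will not close, and there are no ``$\vc B(\vu,\S):\S$-type'' cancellations to appeal to.

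The slip is in where the $\vc B(\vu,\S)$-terms come from. There is no separate contribution $-\eps^{\kappa+\xi}\int_D\vc B(\vu,\S):\T_\eps\dd x$, and the compensating term has the opposite sign to yours. Two $\del_t\S$-terms arise: $-\eps^{\kappa+\xi}\int_D\T_\eps:\del_t\S\dd x$ from the product rule, and $+\eps^{\kappa+\xi}\int_D\S:\del_t\S\dd x$ from differentiating $\frac12\eps^{\kappa+\xi}\|\S\|_{L^2}^2$. They combine to $-\eps^{\kappa+\xi}\int_D(\T_\eps-\S):\del_t\S\dd x$. Now substitute $\del_t\S=\oldr{\vu}{\S}-(\vu\cdot\nabla)\S+\eps_1\div(\gamma(\nabla\S)\nabla\S)+\vc B(\vu,\S)$. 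This yields:
\begin{itemize}
\item the target term $-\eps^{\kappa+\xi}\int_D(\T_\eps-\S):\oldr{\vu}{\S}\dd x$;
\item the transport and diffusion pieces, which you handle correctly (the latter using $\del_n\S=0$);
\item a single leftover $-\eps^{\kappa+\xi}\int_D(\T_\eps-\S):\vc B(\vu,\S)\dd x$.
\end{itemize}
The identity actually needed is therefore
\begin{align*}
\eps^{\kappa+\xi}\int_D (\T_\eps-\S):\big[\vc B(\vv_\eps,\T_\eps)-\vc B(\vu,\S)\big]\dd x = \eps^{\kappa+\xi}\int_D(\T_\eps-\S):\big[\vc B(\vv_\eps-\vu,\T_\eps)+\vc B(\vu,\T_\eps-\S)\big]\dd x .
\end{align*}
This is immediate from bilinearity, and it is exactly how the paper concludes. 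With this correction your argument goes through. Reading \eqref{REI} in integrated form on $[0,\tau]$, as you do, is also sufficient for the subsequent Gr\"onwall arguments.
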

\begin{proof}
In \eqref{TotEnEps} we stated the total energy inequality for weak solutions to \eqref{NSE} on $\O_\eps$.
The same argument applies to weak solutions on $D$, which immediately gives the result for $(\vu,\S)=(0,0)$, i.e., for almost every $\tau\in [0,T]$ it holds
\begin{align}\label{TotEnEpsD}
\begin{split}
    \left[ E(\vv_\eps,\T_\eps|0,0) \right]_{t=0}^{t=\tau} 
    &+ \int_0^\tau \int_{D} 2(1-\beta_\eps) \eps^2 \mu(\We_1 \Dv{v_\eps}) |\Dv{v_\eps}|^2 \dd x \dd t  \\
    &\qquad + \int_0^\tau \int_{D} \eps^{\kappa+\xi} \eps_1 \gamma( \nabla \T_\eps) |\nabla \T_\eps|^2 + \eps^\xi |\T_\eps| ^2 \dd x \dd t \\
    &\leq \int_0^\tau \int_{D} 2(1-\beta_\eps) \vv_\eps \cdot \vc f +\vc \eps^{\kappa+\xi} B(\vv_\eps,\T_\eps):\T_\eps \dd x \dd t , 
\end{split}    
\end{align}
where we have also used \eqref{BAconversion}.
Observe that by a density argument, it is enough to establish \eqref{REI} for arbitrary smooth functions $\vu \in C_c^\infty([0,T] \times D; \R^3), \, \S \in C^\infty([0,T] \times D; \R^{3\times 3})$ with $\div \vu=0$ and $\del_n \S |_{\del D} = 0$.
We will therefore assume this regularity in the sequel.
In particular, we may use $(\vu,\S)$ as test functions in the definition of a weak solution, and obtain
       \begin{align*}
            &\left[ \int_D 2\eps^\lambda(1-\beta_\eps)\vv_\eps\cdot \vu + \eps^{\kappa+\xi} \T_\eps : \S \dd x \right]_{t=0}^{t=\tau} - 
            \int_0^\tau\int_D 2\eps^\lambda (1-\beta_\eps)\vv_\eps\cdot \del_t \vu + \eps^{\kappa+\xi} \T_\eps : \del_t \S \dd x \dd t\\
            &\qquad = \int_0^\tau\int_{D}  2(1-\beta_\eps) \vc f \cdot \vu  \dd x \dd t - \int_0^\tau \int_{D} 2(1-\beta_\eps)\big( \eps^\xi \T_\eps - n_\eps(\vv_\eps) \big) : \nabla \vu \dd x \dd t\\
            &\qquad \quad + \int_0^\tau \int_{D} ( 2\eps^\xi (1-\beta_\eps) \Dv{v_\eps}-\eps^\xi \T_\eps-\eps^{\kappa+\xi}(\vv_\eps \cdot \nabla) \T_\eps + \eps^{\kappa+\xi} \vc B(\vv_\eps,\T_\eps)) : \S \dd x \dd t \\
             &\qquad \quad - \int_0^\tau \int_{D} \eps_1 \eps^{\kappa+\xi} \gamma(\nabla \T_\eps) \nabla \T_\eps : \nabla \S \dd x \dd t,
        \end{align*}
where we have introduced the convenient short-hand notation
\begin{align*}
    n_\eps(\vv)&:=\eps^\lambda \vv \otimes \vv - \eps^2 \mu(\We_1 \Dv{\vv})\Dv{\vv}.
\end{align*}
Therefore,
\begin{align*}
    &\left[E(\vv_\eps, \T_\eps | \vu, \S) \right]_{t=0}^{t=\tau}=
    \left[ E(\vv_\eps,\T_\eps | 0,0) \right]_{t=0}^{t=\tau}  \\
    &\quad +\int_0^\tau\int_D 2\eps^\lambda (1-\beta_\eps)\vu\cdot \del_t \vu + \eps^{\kappa+\xi} \S : \del_t \S \dd x \dd t
    -\left[ \int_D 2\eps^\lambda(1-\beta_\eps)\vv_\eps\cdot \vu + \eps^{\kappa+\xi} \T_\eps : \S \dd x \right]_{t=0}^{t=\tau}
    \\ 
    &= \left[ E(\vv_\eps,\T_\eps | 0,0) \right]_{t=0}^{t=\tau} \\
    &\quad - \int_0^\tau\int_{D} 2\eps^\lambda (1-\beta_\eps) (\vv_\eps - \vu) \cdot \del_t \vu \dd x 
    - \int_{D} 2(1-\beta_\eps)  \vu \cdot \vc f \dd x \dd t \\
    &\quad + \int_0^\tau \int_{D} 2(1-\beta_\eps) \nabla \vu : (\eps^\xi \T_\eps - n_\eps(\vv_\eps)) \dd x \dd t - \int_0^\tau \int_{D} \eps^{\kappa+\xi} (\T_\eps - \S) : \del_t \S  \dd x \dd t \\
    &\quad  - \int_0^\tau \int_{D} \eps^\xi  \S : (2(1-\beta_\eps) \Dv{\vv_\eps} - \T_\eps) \dd x \dd t + \int_0^\tau \int_{D} \eps^{\kappa+\xi} \S : ( (\vv_\eps \cdot \nabla) \T_\eps - \vc B(\vv_\eps, \T_\eps)) \dd x \dd t\\
    &\quad + \int_0^\tau \int_D \eps^{\kappa+\xi} \nabla \S:(\eps_1 \gamma(\nabla \T_\eps) \nabla \T_\eps) \dd x \dd t,
\end{align*}
and hence we may use \eqref{TotEnEpsD}, $\int_{D} (\T_\eps: \Dv{\vv_\eps} - \nabla \vv_\eps :  \T_\eps) \dd x=0$, as well as
\begin{align*}
	\int_{D} \T_\eps : ( (\vv_\eps \cdot \nabla) \T_\eps) \dd x = 0
	\quad \text{and} \quad
	\int_{D} \nabla\vv_\eps : (\vv_\eps\otimes\vv_\eps) \dd x = 0
\end{align*}
to learn
\begin{align*}
    &\left[E(\vv_\eps, \T_\eps | \vu, \S) \right]_{t=0}^{t=\tau}    \\ 
    &\le \int_0^\tau\int_{D}  2(1-\beta_\eps) (\vv_\eps - \vu) \cdot (\vc f - \eps^\lambda \del_t \vu) \dd x \dd t 
    - \int_0^\tau \int_{D} 2(1-\beta_\eps) \nabla (\vv_\eps - \vu) : (\eps^\xi \T_\eps - n_\eps(\vv_\eps)) \dd x \dd t \\
    &\quad + \int_0^\tau \int_{D} \eps^\xi  (\T_\eps - \S) : (2(1-\beta_\eps) \Dv{\vv_\eps} - \T_\eps ) \dd x \dd t\\
    &\quad - \int_0^\tau \int_{D} \eps^{\xi+\kappa} (\T_\eps - \S) : ( (\vv_\eps \cdot \nabla) \T_\eps  - \vc B(\vv_\eps,\T_\eps) + \del_t \S)  \dd x \dd t\\
    &\quad - \int_0^\tau \int_D \eps^{\kappa+\xi}\nabla (\T_\eps-\S): \eps_1\gamma(\nabla\T_\eps)\nabla\T_\eps \dd x \dd t.
\end{align*}
Let us now rewrite the individual terms.
Since
\begin{align*}
 - \int_{D}  \nabla (\vv_\eps - \vu) : (\eps^\xi \S - n_\eps(\vu)) \dd x= \int_{D} (\vv_\eps - \vu) \cdot (\eps^\xi \div \S - N_\eps(\vu)) \dd x,   
\end{align*}
we have
\begin{align*}
     &\int_{D}  (\vv_\eps - \vu) \cdot (\vc f - \eps^\lambda \del_t \vu) \dd x - \int_{D} \nabla (\vv_\eps - \vu) : (\eps^\xi \T_\eps - n_\eps(\vv_\eps)) \dd x \\
     &=  \int_{D}  (\vv_\eps - \vu) \cdot (\vc f +\eps^\xi\div\S- \eps^\lambda \del_t \vu - N_\eps(\vu)) \dd x \\
     &\quad + \int_{D} \nabla (\vv_\eps - \vu) : (n_\eps(\vu)-n_\eps(\vv_\eps)) \dd x  - \int_{D}  \nabla(\vv_\eps - \vu) : \eps^\xi( \T_\eps - \S) \dd x.
\end{align*}

Next,
\begin{align*}
    &\int_{D} \eps^\xi (\T_\eps - \S) : ( 2(1-\beta_\eps) \Dv{\vv_\eps} - \T_\eps - \eps^\kappa [ (\vv_\eps \cdot \nabla) \T_\eps - \vc B(\vv_\eps, \T_\eps) + \del_t \S ] ) \dd x \\
    &= -\int_{D} \eps^\xi (\T_\eps - \S) : ( \S + \eps^\kappa \oldr{\vu}{\S} - 2(1-\beta_\eps) \Dv{u} ) \dd x \\
    &\quad + \int_{D} \eps^\xi (\T_\eps - \S) : (2(1-\beta_\eps) \Dv{\vv_\eps} - \T_\eps - \eps^\kappa [ (\vv_\eps \cdot \nabla) \T_\eps  - \vc B(\vv_\eps, \T_\eps) ] ) \dd x \\
    &\quad + \int_{D} \eps^\xi (\T_\eps - \S) : (\eps^\kappa [ (\vu \cdot \nabla) \S - \vc B(\vu, \S) ] + \S - 2(1-\beta_\eps) \Dv{u}) \dd x \\
    &\quad + \int_D \eps^{\kappa+\xi}(\T_\eps-\S) : (\eps_1\gamma(\nabla\S)\nabla\S) \dd x.
\end{align*}

Hence, using also integration by parts to see that
\begin{align*}
    \int_{D} (\vv_\eps-\vu) \cdot \div(\T_\eps - \S) + (\T_\eps - \S) : (\Dv{\vv_\eps} - \Dv{u}) \dd x = 0,
\end{align*}
we find
\begin{align*}
    &\left[E(\vv_\eps, \T_\eps | \vu, \S) \right]_{t=0}^{t=\tau} \\
    &\le \int_0^\tau \int_{D}  2(1-\beta_\eps) (\vv_\eps - \vu) \cdot (\vc f +\eps^\xi \div \S - \eps^\lambda \del_t \vu - N_\eps(\vu)) \dd x \dd t \\
    &\quad + \int_0^\tau \int_{D} 2(1-\beta_\eps) \nabla (\vv_\eps - \vu) : (n_\eps(\vu)-n_\eps(\vv_\eps)) \dd x \dd t\\
    &\quad - \int_0^\tau \int_{D} \eps^\xi (\T_\eps - \S) : ( \S + \eps^\kappa \oldr{\vu}{\S} - 2(1-\beta_\eps) \Dv{u} ) \dd x \dd t\\
    &\quad - \int_0^\tau \int_{D} \eps^\xi |\T_\eps - \S|^2 \dd x \dd t\\
    &\quad - \int_0^\tau \int_{D} \eps^{\kappa+\xi} \nabla (\T_\eps - \S) : ( \eps_1 \gamma(\nabla \T_\eps) \nabla \T_\eps - \eps_1 \gamma(\nabla \S) \nabla \S ) \dd x \dd t\\
    &\quad + \int_0^\tau \int_{D} \eps^{\kappa+\xi} (\T_\eps - \S) : ( (\vu \cdot \nabla) \S - (\vv_\eps \cdot \nabla) \T_\eps + \vc B(\vv_\eps, \T_\eps) - \vc B(\vu, \S) ) \dd x \dd t.
\end{align*}
Since
\begin{align*}
    \int_{D} \nabla (\vv_\eps-\vu) : ((\vv_\eps \otimes \vv_\eps)-(\vu\otimes\vu)) \dd x = \int_{D} (\vv_\eps - \vu) \cdot ((\vu - \vv_\eps) \cdot \nabla)\vu \dd x,
\end{align*}
where we used incompressibility of $\vv_\eps$ and $\vu$, we may recast this equivalently as
\begin{align}\label{REI_e3}
\begin{split}
    &\left[E(\vv_\eps, \T_\eps | \vu, \S) \right]_{t=0}^{t=\tau} + \int_0^\tau \int_{D} \eps^\xi |\T_\eps - \S|^2 \dd x \dd t \\
    &\quad + \int_0^\tau  \int_{D} \eps^{\kappa+\xi} \eps_1 \nabla (\T_\eps - \S) : ( \gamma(\nabla \T_\eps) \nabla \T_\eps - \gamma(\nabla \S) \nabla \S ) \dd x \dd t\\
    &\quad + \int_0^\tau  \int_{D} 2\eps^2 (1-\beta_\eps) \nabla (\vv_\eps - \vu) : ( \mu(\We_1 \Dv{\vv_\eps}) \Dv{\vv_\eps} - \mu(\We_1 \Dv{u}) \Dv{u} ) \dd x \dd t\\
    &\le \int_0^\tau  \int_{D} 2(1-\beta_\eps) (\vv_\eps - \vu) \cdot ( \vc f +\eps^\xi \div \S - \eps^\lambda \del_t \vu - N_\eps(\vu)) \dd x \dd t \\
    &\quad + \int_0^\tau \int_{D} 2(1-\beta_\eps) \eps^\lambda (\vv_\eps - \vu) \cdot ((\vu - \vv_\eps) \cdot \nabla) \vu \dd x \dd t\\
    &\quad - \int_0^\tau \int_{D} \eps^\xi (\T_\eps - \S) : ( \S + \eps^\kappa \oldr{\vu}{\S} - 2(1-\beta_\eps) \Dv{u} ) \dd x \dd t\\
    &\quad + \int_0^\tau \int_{D} \eps^{\kappa+\xi} (\T_\eps - \S) : ( (\vu \cdot \nabla) \S - (\vv_\eps \cdot \nabla) \T_\eps + \vc B(\vv_\eps, \T_\eps) - \vc B(\vu, \S) ) \dd x \dd t.
\end{split}
\end{align}
Subtracting from \eqref{REI_e3} the same estimate \eqref{REI_e3} with $\tau$ replaced by $\tau\pm h$ for small $h>0$ and dividing by $h$, we obtain the final relative energy inequality \eqref{REI} in the limit $h\to 0$, if we additionally observe that by incompressibility, the last integral can be recast as
\begin{align*}
    &\int_{D} \eps^{\kappa+\xi} (\T_\eps - \S) : ( (\vu \cdot \nabla) \S - (\vv_\eps \cdot \nabla) \T_\eps + \vc B(\vv_\eps, \T_\eps) - \vc B(\vu, \S) ) \dd x \\
    &= \int_{D} \eps^{\kappa+\xi} (\T_\eps - \S) : [ ((\vu - \vv_\eps) \cdot \nabla) \S + \vc B(\vv_\eps - \vu, \T_\eps) + \vc B(\vu, \T_\eps - \S) ] \dd x.
    \qedhere
\end{align*}
\end{proof}

\begin{proof}[Proof of Theorem \ref{thm:quant}]
The strategy is to use Lemma \ref{lem:REI} with $D:=\O_\eps$, $\vu:=\vu_\eps := W_\eps \vv - B_\eps(\vv)$, and $\S=0$, where $\vv$ is the strong solution to Darcy's law, $W_\eps$ is the function from Lemma~\ref{lem:Weps}, and $B_\eps$ is the operator from Lemma~\ref{lem:Bog-type}.
The choice $\S=0$ is motivated by the fact that the polymeric stress is not seen in the limiting system.
Furthermore, note that by the strong solution property of $\vv$, we have
\begin{align*}
    \|\vu_\eps\|_{W^{1,\infty} L^\infty} + \eps \|\nabla \vu_\eps\|_{L^\infty L^p} \lesssim 1, \qquad \forall p \in [1,\infty).
\end{align*}

The relative energy inequality \eqref{REI} yields
\begin{align*}
    &\partial_t E(\vv_\eps, \T_\eps | \vu_\eps, 0)
    + \int_{\O_\eps} \eps^\xi |\T_\eps|^2 \dd x
    + \int_{\O_\eps} \eps^{\kappa+\xi} \eps_1 \nabla \T_\eps : ( \gamma(\nabla \T_\eps) \nabla \T_\eps ) \dd x \notag \\
    &\quad + \int_{\O_\eps} 2 \eps^2 (1-\beta_\eps) (\Dv{v_\eps} - \Dv{u_\eps}) : ( \mu(\We_1 \Dv{v_\eps}) \Dv{v_\eps} - \mu(\We_1 \Dv{u_\eps}) \Dv{u_\eps} ) \dd x \notag \\
    &\le \int_{\O_\eps} 2(1-\beta_\eps) (\vv_\eps - \vu_\eps) \cdot (\vc f - \eps^\lambda \del_t \vu_\eps - N_\eps(\vu_\eps)) \dd x \notag \\
    &\quad + \int_{\O_\eps} 2(1-\beta_\eps) \eps^\lambda (\vv_\eps - \vu_\eps) \cdot ((\vu_\eps - \vv_\eps) \cdot \nabla) \vu_\eps \dd x \\
    &\quad + \int_{\O_\eps} 2(1-\beta_\eps) \eps^\xi \T_\eps : \Dv{u_\eps} + \eps^{\kappa+\xi} \T_\eps : \vc B(\vv_\eps, \T_\eps) \dd x .
\end{align*}

Then, we estimate
\begin{align*}
    &\int_{\O_\eps} (1-\beta_\eps) (\vv_\eps - \vu_\eps) \cdot \eps^\lambda \del_t \vu_\eps \dd x \lesssim \eps^\lambda (1-\beta_\eps) \|\vv_\eps - \vu_\eps\|_{L^2} \|\vu_\eps\|_{W^{1,\infty} L^2} \lesssim E(\vv_\eps, \T_\eps | \vu_\eps, 0) + \eps^\lambda, \\
    &\int_{\O_\eps} 2(1-\beta_\eps) \eps^\xi \T_\eps : \Dv{u_\eps} \dd x \lesssim \eps^\xi \|\vu_\eps\|_{L^2} \|\nabla \T_\eps\|_{L^q} \lesssim \eps^\frac{q \xi}{q-1} [\eps^{\kappa+\xi} \eps_1]^{-\frac{1}{q-1}} + \delta \eps^{\kappa+\xi} \eps_1 \|\nabla \T_\eps\|_{L^q}^q \\
    &\qquad = [ \eps^{\xi(q-1) - \kappa} / \eps_1 ]^{\frac{1}{q-1}} + \delta \eps^{\kappa+\xi} \eps_1 \|\nabla \T_\eps\|_{L^q}^q .
\end{align*}

Next, for the contribution $\eps^\lambda\div(\vu_\eps \otimes \vu_\eps)$ to $N_\eps(\vu_\eps)$, we obtain
\begin{align*}
    &\int_{\O_\eps} (1-\beta_\eps) \eps^\lambda (\vv_\eps - \vu_\eps) \cdot \div(\vu_\eps \otimes \vu_\eps) \dd x \lesssim \eps^\lambda (1-\beta_\eps)\|\vv_\eps-\vu_\eps\|_{L^2} \|\vu_\eps\|_{L^\infty} \|\nabla \vu_\eps\|_{L^2} \\
    &\lesssim \eps^\lambda (1-\beta_\eps) \|\nabla(\vv_\eps-\vu_\eps)\|_{L^2} \lesssim \delta (1-\beta_\eps) \eps^2 \|\Dv{v_\eps} - \Dv{u_\eps}\|_{L^2}^2 + \eps^{2(\lambda-1)},
\end{align*}
and the former term can be absorbed by dissipation for $\delta > 0$ small enough. Similarly, we find
\begin{align*}
    &\int_{\O_\eps} (1-\beta_\eps) \eps^\lambda (\vv_\eps - \vu_\eps) \cdot ((\vu_\eps - \vv_\eps) \cdot \nabla) \vu_\eps \dd x \\
    &\lesssim (1-\beta_\eps) \eps^{\lambda} \|\vv_\eps - \vu_\eps\|_{L^2}^2 \|\nabla (W_\eps \vv)\|_{L^{\infty}} + (1-\beta_\eps) \eps^\lambda \|\vv_\eps - \vu_\eps\|_{L^2} \|\vv_\eps - \vu_\eps\|_{L^6} \|\nabla B_\eps(\vv)\|_{L^3} \\
    &\lesssim (1-\beta_\eps) \eps^{\lambda+1} \|\nabla(\vv_\eps - \vu_\eps)\|_{L^2}^2 
    \lesssim \delta (1-\beta_\eps) \eps^2 \|\nabla (\vv_\eps - \vu_\eps)\|_{L^2}^2.
\end{align*}

Since $\vv$ is a strong solution to Darcy's law \eqref{Darcy}, it especially fulfills it pointwise in $\O_\eps$. Hence, we can replace the force and use solenoidality of $(\vv_\eps - \vu_\eps)$ to infer
\begin{align}\label{REI_e1}
\begin{split}
    &\del_t E(\vv_\eps, \T_\eps | \vu_\eps, 0) + \int_{\O_\eps} \eps^\xi |\T_\eps|^2 \dd x + \int_{\O_\eps} \eps^{\kappa+\xi} \eps_1 \gamma(\nabla \T_\eps) |\nabla \T_\eps|^2 \dd x \\
    &\quad + \int_{\O_\eps} 2 \eps^2 (1-\beta_\eps) (\Dv{v_\eps} - \Dv{u_\eps}) : ( \mu(\We_1 \Dv{v_\eps}) \Dv{v_\eps} - \mu(\We_1 \Dv{u_\eps}) \Dv{u_\eps} ) \dd x \\
    &\lesssim - \int_{\O_\eps} (1-\beta_\eps) \eps^2 \nabla (\vv_\eps - \vu_\eps) : \mu(\We_1 \Dv{u_\eps}) \Dv{u_\eps} \dd x \\
    &\quad + \int_{\O_\eps} (1-\beta_\eps) (\vv_\eps - \vu_\eps) \cdot \mu_0 M \vv \dd x + \int_{\O_\eps} \eps^{\kappa+\xi} \T_\eps : \vc B(\vv_\eps, \T_\eps) \dd x + E(\vv_\eps, \T_\eps | \vu_\eps, 0) \\
    &\quad + \eps^{\lambda}+ \eps^{2(\lambda-1)} + [\eps^{\xi(q-1)-\kappa} / \eps_1]^\frac{1}{q-1}.
\end{split}
\end{align}

Let us rewrite the first integral on the right-hand side using the definition of $\mu$ from \eqref{assMu} and $\vu_\eps$:
\begin{align*}
    &- \int_{\O_\eps} (1-\beta_\eps) \eps^2 \nabla (\vv_\eps - \vu_\eps) : \mu(\We_1 \Dv{u_\eps}) \Dv{u_\eps} \dd x \\
    &= - \int_{\O_\eps} (1-\beta_\eps) \eps^2 \nabla (\vv_\eps - \vu_\eps) : \mu_0  \Dv{u_\eps} \dd x - \int_{\O_\eps} (1-\beta_\eps) \eps^2 \nabla (\vv_\eps - \vu_\eps) : g(\We_1 |\Dv{u_\eps}|) \Dv{u_\eps} \dd x \\
    &= - \int_{\O_\eps} (1-\beta_\eps) \eps^2 \nabla (\vv_\eps - \vu_\eps) : \mu_0  \D(W_\eps \vv) \dd x + \int_{\O_\eps} (1-\beta_\eps) \eps^2 \nabla (\vv_\eps - \vu_\eps) : \mu_0  \D B_\eps(\vv) \dd x \\
    &\quad - \int_{\O_\eps} (1-\beta_\eps) \eps^2 \nabla (\vv_\eps - \vu_\eps) : g(\We_1 |\Dv{u_\eps}|) \Dv{u_\eps} \dd x .
\end{align*}

The second integral is estimated as
\begin{align*}
    &\int_{\O_\eps} (1-\beta_\eps) \eps^2 \nabla (\vv_\eps - \vu_\eps) : \mu_0  \D B_\eps(\vv) \dd x \lesssim \eps^2 (1-\beta_\eps) \|\nabla (\vv_\eps - \vu_\eps)\|_{L^2} \|\nabla B_\eps(\vv)\|_{L^2} \\
    &\lesssim \delta \eps^2 (1-\beta_\eps) \|\nabla (\vv_\eps - \vu_\eps)\|_{L^2}^2 + \eps^2\|(W_\eps - \mathbb{I}) : \nabla\vv\|_{L^2}^2 \\
    &\lesssim \delta \eps^2 (1-\beta_\eps) \|\nabla (\vv_\eps - \vu_\eps)\|_{L^2}^2 + \eps^2\|W_\eps - \mathbb{I}\|_{[W^{1,1}]'}^2 \|\vv\|_{W^{2,1}}^2 \\
    &\lesssim \delta \eps^2 (1-\beta_\eps) \|\nabla (\vv_\eps - \vu_\eps)\|_{L^2}^2 + \eps^4 .
\end{align*}

As for the first one,
\begin{align*}
    &- \int_{\O_\eps} (1-\beta_\eps) \eps^2 \nabla (\vv_\eps - \vu_\eps) : \mu_0  \D(W_\eps \vv) \dd x \\
    &= - (1-\beta_\eps) \mu_0 \langle \eps^2 (-\Delta W_\eps + \nabla Q_\eps), \vv \otimes (\vv_\eps - \vu_\eps) \rangle \\
    &\quad + \int_{\O_\eps} (1-\beta_\eps) \eps^2 (\vv_\eps - \vu_\eps) \cdot \mu_0  (\Delta(W_\eps \vv) - \Delta W_\eps \vv - (Q_\eps \cdot \nabla) \vv) \dd x .
\end{align*}
We see that, by definition of $(W_\eps, Q_\eps)$ from Lemma~\ref{lem:Weps}, the first term above cancels with the term
\begin{align*}
    \int_{\O_\eps} (1-\beta_\eps)(\vv_\eps-\vu_\eps)\cdot \mu_0 M\vv \dd x
\end{align*}
on the right-hand side of \eqref{REI_e1}.
As for the second term, we estimate
\begin{align*}
    &\int_{\O_\eps} (1-\beta_\eps) \eps^2 (\vv_\eps - \vu_\eps) \cdot \mu_0  (\Delta(W_\eps \vv) - \Delta W_\eps \vv - (Q_\eps \cdot \nabla) \vv) \dd x \\
    &\lesssim (1-\beta_\eps) \eps^2 \|\vv_\eps - \vu_\eps\|_{L^2} \|\Delta (W_\eps \vv) - \Delta W_\eps \vv - (Q_\eps \cdot \nabla) \vv\|_{L^2} \\
    &\lesssim \delta (1-\beta_\eps) \eps^2 \|\nabla (\vv_\eps - \vu_\eps)\|_{L^2}^2 + \eps^2.
\end{align*}

Lastly, we see
\begin{align*}
    |g(s)| \lesssim s + s^{p-2} \mathbf{1}_{p>3} \quad \forall s \geq 0
\end{align*}
such that
 \begin{align*}
     &\int_{\O_\eps} (1-\beta_\eps) \eps^2 \nabla (\vv_\eps - \vu_\eps) : g(\We_1 |\Dv{u_\eps}|) \Dv{u_\eps} \dd x \\
     &\lesssim \int_{\O_\eps} (1-\beta_\eps) \eps^2 | \nabla (\vv_\eps - \vu_\eps) | (|\We_1 \Dv{u_\eps}| + |\We_1 \Dv{u_\eps}|^{p-2} \mathbf{1}_{p>3}) |\Dv{u_\eps}| \dd x \\
     &\lesssim (1-\beta_\eps) \eps^2 \|\nabla (\vv_\eps - \vu_\eps)\|_{L^2} (\We_1 \|\nabla \vu_\eps\|_{L^4}^2 + \We_1^{p-2} \|\nabla \vu_\eps\|_{L^{2(p-1)}}^{p-1} \mathbf{1}_{p>3}) \\
     &\lesssim \delta \eps^2 (1-\beta_\eps) \|\nabla (\vv_\eps - \vu_\eps)\|_{L^2}^2 + (\We_1 / \eps)^2 + (\We_1 / \eps)^{2(p-2)} \mathbf{1}_{p>3} \\
     &\lesssim \delta \eps^2 (1-\beta_\eps) \|\nabla (\vv_\eps - \vu_\eps)\|_{L^2}^2 + (\We_1/\eps)^2.
 \end{align*}

 Therefore, estimate \eqref{REI_e1} takes the form
 \begin{align*}
     &\del_t E(\vv_\eps, \T_\eps | \vu_\eps, 0) + \int_{\O_\eps} \eps^\xi |\T_\eps|^2 \dd x + \int_{\O_\eps} \eps^{\kappa+\xi} \eps_1 \gamma(\nabla \T_\eps) |\nabla \T_\eps|^2 \dd x \notag \\
    &\quad + \int_{\O_\eps} 2 \eps^2 (1-\beta_\eps) (\Dv{v_\eps} - \Dv{u_\eps}) : ( \mu(\We_1 \Dv{v_\eps}) \Dv{v_\eps} - \mu(\We_1 \Dv{u_\eps}) \Dv{u_\eps} ) \dd x \notag \\
    &\lesssim \int_{\O_\eps} \eps^{\kappa+\xi} \T_\eps : \vc B(\vv_\eps, \T_\eps) \dd x + E(\vv_\eps, \T_\eps | \vu_\eps, 0) + \eps^{2(\lambda-1)} + \eps^2 + [\eps^{\xi(q-1)-\kappa} / \eps_1]^\frac{1}{q-1} + (\We_1/ \eps)^2,
 \end{align*}
 where we have also used $\lambda\ge \min\{2,2(\lambda-1)\}$ in order to estimate $\eps^\lambda\le \eps^2+\eps^{2(\lambda-1)}$.

 Finally, we estimate similarly to before
 \begin{align}\label{REI_e2}
 \begin{split}
     &\int_{\O_\eps} \eps^{\kappa+\xi} \T_\eps : \vc B(\vv_\eps, \T_\eps) \dd x \lesssim \int_{\O_\eps} \eps^{\kappa+\xi} |\vv_\eps| |\T_\eps| |\nabla \T_\eps| \dd x \\
     &\lesssim \eps^{\kappa+\xi+1} \|\Dv{v_\eps}\|_{L^2} \|\nabla \T_\eps\|_{L^q} + \eps^{\kappa+\xi+1} \|\Dv{v_\eps}\|_{L^2} \|\nabla \T_\eps\|_{L^q}^2 + \eps^{\kappa+\xi+2} \|\Dv{v_\eps}\|_{L^2}^2 \|\nabla \T_\eps\|_{L^q}^2 \\
     &\lesssim \eps^{\kappa+\xi} \|\nabla \T_\eps\|_{L^q} + \eps^{\kappa+\xi} \|\nabla \T_\eps\|_{L^q}^2 \\
     &\lesssim \eps^{\kappa+\xi} ( \eps_1^{-\frac{1}{q-1}} + \eps_1^{-\frac{2}{q-2}} ) + \delta \eps^{\kappa+\xi} \eps_1 \|\nabla \T_\eps\|_{L^q}^q,
 \end{split}
 \end{align}
 so that \eqref{REI_e1} takes the final form
 \begin{align*}
     &\del_t E(\vv_\eps, \T_\eps | \vu_\eps, 0) + \int_{\O_\eps} \eps^\xi |\T_\eps|^2 \dd x + \int_{\O_\eps} \eps^{\kappa+\xi} \eps_1 \gamma(\nabla \T_\eps) |\nabla \T_\eps|^2 \dd x \notag \\
    &\quad + \int_{\O_\eps} 2 \eps^2 (1-\beta_\eps) (\Dv{v_\eps} - \Dv{u_\eps}) : ( \mu(\We_1 \Dv{v_\eps}) \Dv{v_\eps} - \mu(\We_1 \Dv{u_\eps}) \Dv{u_\eps} ) \dd x \notag \\
    &\lesssim E(\vv_\eps, \T_\eps | \vu_\eps, 0) + \eps^{2(\lambda-1)} + \eps^2 + [\eps^{\xi(q-1)-\kappa} / \eps_1]^\frac{1}{q-1} + [\eps^{(q-1)(\kappa+\xi)} / \eps_1]^{\frac{1}{q-1}} \\
    &\qquad + [\eps^{(q-2)(\kappa+\xi)/2} / \eps_1]^{\frac{2}{q-2}} + (\We_1 / \eps)^2.
 \end{align*}

 Gr\"onwall's inequality then yields for a.e.~$t \in [0,T]$
 \begin{align*}
     E(\vv_\eps, \T_\eps | \vu_\eps, 0)(t) &\lesssim E(\vv_{\eps 0}, \T_{\eps 0} | \vu_{\eps 0}, 0) + \eps^{2(\lambda-1)} + \eps^2 + [\eps^{\xi(q-1)-\kappa} / \eps_1]^\frac{1}{q-1} \\
    &\qquad + [\eps^{(q-1)(\kappa+\xi)} / \eps_1]^{\frac{1}{q-1}} + [\eps^{(q-2)(\kappa+\xi)/2} / \eps_1]^{\frac{2}{q-2}} + (\We_1 / \eps)^2.
 \end{align*}

 Finally, thanks to the estimates on $B_\eps$ from Lemma~\ref{lem:Bog-type} and Poincar\'e's inequality in Lemma~\ref{lem:Poinc}, it is easy to see that
 \begin{align*}
     \|\tilde \vv_\eps - \vv\|_{L^2 L^2}^2 &\lesssim \eps^2 \|\nabla (\tilde \vv_\eps - \tilde \vu_\eps)\|_{L^2 L^2}^2 + \eps^2, \\
     E(\vv_{\eps 0} , \T_{\eps 0} | \vu_{\eps 0}, 0) &\lesssim \eps^\lambda \|\tilde \vv_{\eps 0} - \vv(0,\cdot)\|_{L^2 L^2}^2 + \eps^{\kappa+\xi} \|\T_{\eps 0}\|_{L^2}^2 + \eps^2,
 \end{align*}
 finishing the proof of Theorem~\ref{thm:quant}.
\end{proof}
 \begin{rem}
     Instead of estimating $\Dv{v_\eps}$ in \eqref{REI_e2} directly, one may use
     \begin{align*}
         \|\Dv{v_\eps}\|_{L^2} \leq \|\nabla (\vv_\eps - \vu_\eps)\|_{L^2} + \|\nabla \vu_\eps\|_{L^2}
    \end{align*}
    and absorb the first part by dissipation, however, this will not give a better convergence rate since $\nabla \vu_\eps$ and $\nabla \vv_\eps$ have a similar behavior in terms of $\eps$:
    \begin{align*}
        \|\nabla \vu_\eps\|_{L^2} + \|\nabla \vv_\eps\|_{L^2} \lesssim \eps^{-1}.
    \end{align*}
 \end{rem}

\section{weak--strong uniqueness}\label{sec:wk-str}
Using the relative energy identity \eqref{REI}, we finally establish weak--strong uniqueness for system \eqref{NSE}.
In contrast to the quantitative homogenization argument, the first and the third term on the right-hand side of \eqref{REI} vanish identically in this setting, so that only the remaining second and fourth term need to be controlled.
\begin{proof}[Proof of Theorem \ref{thm:wk-str}]
    Since $(\vu, \S)$ is a strong solution, we infer from \eqref{REI} that
    \begin{align*}
        &\del_t E(\vv, \T | \vu, \S) + \int_{\O} |\T - \S|^2 \dd x + \int_{\O} \eps_1 \nabla (\T - \S) : ( \gamma(\nabla \T) \nabla \T - \gamma(\nabla \S) \nabla \S ) \dd x \\
    &\quad + \int_{\O} 2 (\Dv{v} - \D\vu) : ( \mu(\Dv{v}) \Dv{v} - \mu(\Dv{u}) \Dv{u} ) \dd x \\
    &\le 2 \int_{\O} (\vv - \vu) \cdot ((\vu - \vv) \cdot \nabla) \vu \dd x \\
    &\quad + \int_{\O} (\T - \S) : [ ((\vu - \vv) \cdot \nabla) \S + \vc B(\vv - \vu, \T) + \vc B(\vu, \T - \S) ] \dd x .
    \end{align*}
    Similar to before, we estimate
    \begin{align*}
        \int_{\O} (\vv - \vu) \cdot ((\vu - \vv) \cdot \nabla) \vu \dd x &\lesssim \|\vv - \vu\|_{L^2}^2, \\
        \int_{\O} (\T - \S) : ((\vu - \vv) \cdot \nabla) \S \dd x &\lesssim \|\T - \S\|_{L^2}^2 + \|\vv - \vu\|_{L^2}^2,  \\
        \int_{\O} (\T - \S) : \vc B(\vu, \T - \S) \dd x &\lesssim \|\T - \S\|_{L^2}^2 , \\
        \int_\O (\T - \S) : \vc B(\vv - \vu, \T) \dd x &\lesssim \|\T - \S\|_{L^2} \|\Dv{v-u}\|_{L^2} \|\T\|_{L^4} \\
        &\lesssim \|\T - \S\|_{L^2}^2 + \delta \|\Dv{v-u}\|_{L^2}^2,
    \end{align*}
    where $\delta>0$ is chosen sufficiently small to absorb this term to the left-hand side. All in all, we find, using also coercivity of the functions $\mu$ and $\gamma$ from \eqref{assGamma}--\eqref{assMu},
    \begin{align*}
        \del_t E(\vv, \T | \vu, \S) + \int_{\O} |\T - \S|^2 \dd x + \int_{\O} \eps_1 |\nabla (\T - \S)|^q \dd x + \int_{\O} |\Dv{v} - \Dv{u}|^2 \dd x \lesssim E(\vv, \T | \vu, \S),
    \end{align*}
    so that the result follows from Gr\"onwall's inequality.
\end{proof}

\section*{Acknowledgments}
{\it J.S.~gratefully acknowledges the hospitality of Charles University during a research visit to F.O.~in spring 2026. The visit was supported by F.O.'s grant PRIMUS 26/SCI/026.}\\

\paragraph{\bf Data availability} No data are available.
\paragraph{\bf Conflict of interest} The authors declare that there is no conflict of interest.

\appendix

\section{Non-dimensionalization}\label{sec:non-dim}

To arrive at the non-dimensional system \eqref{NSE}, we recall its dimensional form from \cite{KremlPokornySalom2014} as
\begin{align*}
    \begin{cases}
        \div \vv = 0 & \text{in } (0,T) \times \O_\eps,\\
        \rho \del_t \vv + \rho \div(\vv \otimes \vv) + \nabla \pi - \div(\mu(\lambda_1 \Dv{\vv}) \Dv{v}) = \rho \vc f + \div \T & \text{in } (0,T) \times \O_\eps,\\
        \T + \zeta \overset{\triangledown}{\T} = 2 \eta \Dv{v} & \text{in } (0,T) \times \O_\eps,
    \end{cases}
\end{align*}
where $\rho >0$ is constant, $\Dv{v}=\frac12 (\nabla \vv + \nabla^T \vv)$, and ${\mathbf B}(\vv,\T) = {\mathbf W}\T-\T {\mathbf W} + a(\Dv{v} \T + \T \Dv{v})$ with ${\mathbf W}=\frac12 (\nabla \vv - \nabla^T \vv)$. Moreover, $\zeta, \lambda_1$ are time parameters appearing in the upper-convected derivative $\T + \zeta \overset{\triangledown}{\T} = 2 \eta \Dv{v}$, where the Oldroyd derivative is
\begin{align*}
    \overset{\triangledown}{\T} = \partial_t \T + (\vv \cdot \nabla) \T - \eps_1 \div(\gamma(\lambda_2 \nabla\T)\nabla\T) - \vc B(\vv, \T).
\end{align*}
The parameter $\lambda_2$ has the unit of $1/(\text{force per volume})$ and is needed for dimensional reasons only. Now, we introduce characteristic values of length $L_c$, velocity $U_c$, time $T_c$, pressure $P_c$, force $F_c$, dissipation $\gamma_c$, and viscosities $\mu_0, \eta$, and define the Strouhal, Euler, (total) Reynolds, Froude, and Weissenberg number by
\begin{align*}
    \Sr = \frac{L_c}{T_c U_c}, && \Eu = \frac{P_c}{\rho U_c^2}, && \Rey = \frac{\rho U_c L_c}{\mu_0 + \eta}, && \Fr = \frac{U_c}{\sqrt{F_c L_c}}, && \We = \frac{\zeta U_c}{L_c}.
\end{align*}
Note also that $\We \Sr = \zeta / T_c = {\rm De}$ is the Deborah number. Then, setting the (total) stress scale $\tau_c = (\mu_0 + \eta) U_c/L_c = \rho U_c^2 / \Rey$ and the viscosity ratio $\beta = \mu_0 / (\mu_0 + \eta)$, defining $X'=X / X_c$ for each $X \in \{\vv, \T, \pi, \vc f\}$, and dropping primes, we arrive at
\begin{align*}
    \begin{cases}
        \div \vv = 0 & \text{in } (0,T) \times \O_\eps,\\
        \Sr \del_t \vv + \div(\vv \otimes \vv) + \Eu \nabla \pi - \frac{\beta}{\Rey} \div(\mu(\We_1\Dv{v}) \Dv{v}) = \frac{1}{\Fr^2} \vc f + \frac{1}{\Rey} \div \T & \text{in } (0,T) \times \O_\eps,\\
        \T + \We [ \Sr \del_t \T + (\vv \cdot \nabla) \T - \eps_1 \frac{\gamma_c}{L_c U_c} \div( \gamma( \Xi \nabla \T) \nabla \T) - \vc B(\vv, \T) ] = 2 (1-\beta) \Dv{v} & \text{in } (0,T) \times \O_\eps,
    \end{cases}
\end{align*}
where we set $\We_1 = \lambda_1 U_c/L_c$ and $\Xi = \lambda_2 \rho U_c^2/ (\Rey L_c)$. Note also that $\rho U_c^2 / (\Rey L_c)$ has exactly the dimension of the convective term $\rho \div(\vv \otimes \vv)$, which is (force per volume), hence, $\lambda_2$ is needed. Incorporating further $\gamma_c/(L_c U_c)$ into $\eps_1$, we may write the system as
\begin{align*}
    \begin{cases}
        \div \vv = 0 & \text{in } (0,T) \times \O_\eps,\\
        \Sr \del_t \vv + \div(\vv \otimes \vv) + \Eu \nabla \pi - \frac{\beta}{\Rey} \div(\mu(\We_1 \Dv{v}) \Dv{v}) = \frac{1}{\Fr^2} \vc f + \frac{1}{\Rey} \div \T & \text{in } (0,T) \times \O_\eps,\\
        \T + \We [ \Sr \del_t \T + (\vv \cdot \nabla) \T - \eps_1 \div( \gamma( \Xi \nabla \T) \nabla \T) - \vc B(\vv, \T) ] = 2 (1-\beta) \Dv{v} & \text{in } (0,T) \times \O_\eps.
    \end{cases}
\end{align*}
Choosing proper values of the characteristic numbers in terms of $\eps$ leads to system \eqref{NSE}.\\

Furthermore, from the proof of Theorem~1.1 given in \cite{KremlPokornySalom2014}, one can extract the following energy inequalities: for almost every $\tau \in [0,T]$, we have
\begin{align}
    &\left[ \int_{\O_\eps} \Rey \Sr \frac{1}{2} |\vv|^2 \dd x \right]_{t=0}^{t=\tau} + \int_0^\tau \int_{\O_\eps} \beta \mu(\We_1 \Dv{v}) |\Dv{v}|^2 \dd x \dd t \notag \\
    &\qquad \leq \int_0^\tau \int_{\O_\eps} \frac{\Rey}{\Fr^2} \vv \cdot \vc f - \Dv{v} : \T \dd x \dd t, \label{estV} \\
    &\left[ \int_{\O_\eps} \We \Sr \frac{1}{2} |\T|^2 \dd x \right]_{t=0}^{t=\tau} + \int_0^\tau \int_{\O_\eps} \We \eps_1 \gamma(\Xi \nabla \T)|\nabla \T|^2 + |\T|^2 \dd x \dd t \notag \\
    &\qquad = \int_0^\tau \int_{\O_\eps} 2 (1-\beta) \Dv{v} : \T - 2a \We \vv \cdot \div \T^2 \dd x \dd t . \label{estT}
\end{align}
Dividing \eqref{estT} by $(1-\beta)$ and adding to \eqref{estV} times $2$, we obtain the total energy inequality
\begin{align}
    &\left[ \Sr \int_{\O_\eps} \Rey |\vv|^2 + \frac{\We}{2(1-\beta)} |\T|^2 \dd x \right]_{t=0}^{t=\tau} + \int_0^\tau \int_{\O_\eps} 2\beta \mu(\We_1\Dv{v}) |\Dv{v}|^2 \dd x \dd t \notag \\
    &\qquad + \frac{1}{1-\beta}\int_0^\tau \int_{\O_\eps} \We \eps_1 \gamma(\Xi \nabla \T) |\nabla \T|^2 + |\T| ^2 \dd x \dd t \notag \\
    &\leq 2\int_0^\tau \int_{\O_\eps} \frac{\Rey}{\Fr^2} \vv \cdot \vc f - a \frac{\We}{1-\beta} \vv \cdot \div \T^2 \dd x \dd t . \label{TotalEn}
\end{align}

Let us now explain how the scaling \eqref{Strouhaletc} is obtained. Repeating the steps done in Section~\ref{sec:unifEst} for the \emph{dimensional} system \eqref{NSE_not_scaled}, one can find the following bounds for the involved functions, provided $\vv_{\eps 0}, \T_{\eps 0}$ are uniformly bounded in $L^2(\O_\eps)$:

\begin{align*}
    \|\vv\|_{L^\infty L^2}^2 + \|\T\|_{L^\infty L^2}^2 + \|\Dv{v}\|_{L^2 L^2}^2 + \|\Dv{v}\|_{L^p L^p}^p \mathbf{1}_{p>2} + \eps_1 \|\nabla \T\|_{L^q L^q}^q + \|\T\|_{L^2 L^2}^2 \lesssim \eps^2.
\end{align*}
Thus, one may rescale $\vv \sim \eps^2$ and $\T \sim \eps$ to get the system in dimensionless variables, however, this would correspond to specific choices of $\Sr, \Rey, \Fr, \We, \Xi,$ and $\We_1$ such that \emph{all} exponents occurring in \eqref{NSE} are connected to $\eps^2$.
In contrast, we allow the characteristic numbers to vary \emph{independently} of each other.
Hence, we introduce $\lambda, \kappa, \xi, \Xi_\eps, \We_1$ to arrive at system \eqref{NSE}, with the characteristic numbers specified in \eqref{Strouhaletc}.


\end{document}